\newtheorem{assumption}{$Assumption$}
\begin{document}
%
\title{
	Resource-aware Exact Decentralized Optimization Using Event-triggered Broadcasting
	}
%
%
%

\author{Changxin Liu, \IEEEmembership{Student Member, IEEE,} 
	Huiping Li, \IEEEmembership{Member, IEEE,} 
    and Yang Shi, \IEEEmembership{Fellow, IEEE}

	\thanks{C. Liu and Y. Shi are with the Department of Mechanical Engineering, University of Victoria, Victoria, BC V8W 3P6, Canada (e-mail: chxliu@uvic.ca; yshi@uvic.ca). }
	\thanks{H. Li is with the School of Marine Science and Technology, Northwestern Polytechnical University, Xi'an, 710072, China (e-mail: lihuiping@nwpu.edu.cn).}}

\markboth{Journal of \LaTeX\ Class Files,~Vol.~14, No.~8, August~2015}%
{Shell \MakeLowercase{\textit{et al.}}: Bare Demo of IEEEtran.cls for IEEE Journals}
%



\maketitle

\begin{abstract}
This work addresses the decentralized optimization problem where a group of agents with coupled private objective functions work together to exactly optimize the summation of local interests. 
Upon modeling the decentralized problem as an equality-constrained centralized one, 
we leverage the linearized augmented Lagrangian method (LALM) to design an event-triggered decentralized algorithm that only requires light local computation at generic time instants and peer-to-peer communication at sporadic triggering time instants.  
The triggering time instants for each agent are locally determined by comparing the deviation between true and broadcast primal variables with certain triggering thresholds.
{ 
	Provided that the threshold is summable over time, we established a new upper bound for the effect of triggering behavior on the primal-dual residual. Based on this, the same convergence rate $O(\frac{1}{k})$ with periodic algorithms is secured for nonsmooth convex problems. Stronger convergence results have been further established for strongly convex and smooth problems, that is, the iterates linearly converge with exponentially decaying triggering thresholds.}
We examine the developed strategy in two common optimization problems; comparison results illustrate its performance and superiority in exploiting communication resources. 
\end{abstract}

\begin{IEEEkeywords}
Decentralized optimization, event-triggered broadcasting, inexact method, augmented Lagrangian method.
\end{IEEEkeywords}

%
\IEEEpeerreviewmaketitle

\section{Introduction}
%
%
%
%
%
%
Decentralized optimization methods have received increasing attention recently due to { their} key role in advancing future developments of many engineering areas as diverse as wireless decentralized control systems, sensor networks, and decentralized machine learning \cite{nedicnetworktopology,yangasurvey}. Usually, they involve a group of computing units that are connected via a communication network and rely on only local computation and peer-to-peer communication to cooperatively solve a large-scale optimization problem, where some coupling sources in the objective or/and constraint make the partition nontrivial.

This paper considers the case with coupled objective functions, i.e., the global objective is the sum of multiple private ones. In the literature, many efforts have been devoted to problems of this type \cite{yangasurvey}. 
{ Some methods \cite{duchidualaveraging,liudistributed} manage to agree on the estimate of the global gradient, and minimize the sum of a linear function characterized by this estimated gradient and a prox-function at each iteration to generate a sequence of local decision variables \cite{duchidualaveraging,liudistributed}.  
In marked contrast, the strategies in \cite{nedicconstrainedconsensus,nedicachieving,yuanontheconvergence,xuconvergence,quharnessing,shiextra,shionthelinear,xuabregman,fazlyabdistributed,seidmanachebyshev} directly seek consensus on the decision variable by iteratively shifting its local estimate about the global minimizer in light of the local (sub)gradient and the information from its immediate neighbors.} The convergence properties have been thoroughly investigated for this scheme with decaying and constant stepsizes in \cite{nedicconstrainedconsensus} and \cite{yuanontheconvergence}, respectively. It is worth mentioning that when using constant stepsizes with these methods, between the accumulation point and the global minimum, there is always an undesired gap whose magnitude is proportional to the stepsize. To achieve exact convergence, the authors in \cite{shiextra} further added a cumulative correction term to the iteration rule of decentralized gradient descent (DGD) \cite{yuanontheconvergence}; a variant of this method is reported in \cite{shiapro} to handle constraints.  Note that there are other interpretations for the method in \cite{shiextra}; therein please find more details. Another remedy to this problem was reported in \cite{xuconvergence,quharnessing,nedicachieving} where the local gradient used in DGD is replaced by an estimate of the global gradient supplied by the dynamic average consensus scheme. Although these methods share a similar iteration rule, the analyses are significantly different from one to another due to different network configurations. 
{ Another powerful design methodology for distributed optimization is to treat the decentralized optimization problem as a linear equality-constrained centralized one,
and use linearly constrained optimization paradigms.}
For example, the dual decomposition \cite{fazlyabdistributed}, the augmented Lagrangian method \cite{shiaugmented}, the alternating direction method of multipliers (ADMM) \cite{shionthelinear}, the Bregman method \cite{xuabregman}, and other primal-dual methods \cite{seidmanachebyshev,liaprimal,latafatnew,uribeadual} have been used to design decentralized algorithms. { It is worth to mention that the method in \cite{shiextra} and the gradient-tracking distributed optimization in \cite{quharnessing,nedicachieving} also have such primal-dual interpretations, as reported in \cite{liaprimal,jakoveticauni}.}

To reduce the communication cost of periodic algorithms, some asynchronous algorithms have been reported in the literature. For instance, the authors in \cite{nedicasynchronous} considered DGD with random communication link failures, and established convergence rate and error bound for decaying and constant stepsizes, respectively. Using a similar idea, reference \cite{jakovetic} presented an asynchronous DGD where only a randomized set of working agents choose to update their local variables. The authors proved that the local estimates converge to a neighborhood of the minimizer provided that the activation probability grows to $1$ asymptotically. The works \cite{weionthe,changasynchronous} considered asynchronous ADMM and established convergence rates. However, in these methods each agent is still dictated to exactly solving a subproblem at each iteration. Recently, reference \cite{wudecentralized} built an asynchronous decentralized consensus optimization algorithm based on \cite{shiextra} for a network of agents where communication delays may occur, and proved convergence. Another communication-efficient decentralized gradient method was reported in \cite{zhangdistributed}; its novelty may lie in the use of only signs of relative state information between immediate neighbors. However, the convergence is rather slow, i.e., $\frac{\log k}{\sqrt{k}}$, due to diminishing stepsizes. 
{ A random walk incremental strategy was used in \cite{maowalkman} to design a communication-efficient asynchronous decentralized optimization strategy, where the algorithm admits a constant stepsize to achieve exact convergence. The work in \cite{chenlag} considered a communication scenario where a central server does not periodically request gradients from all workers in decomposable convex optimization and established convergence results. The authors in \cite{lancommunication} co-designed the primal-dual decentralized optimization algorithm in outer loop and the inexactly subproblem-solving process in inner loop to save communication resources.}


In another line of research, event-triggered control emerges as a communication-efficient approach for large-scale network control systems { \cite{astromcomparison,tabuadaevent,wangevent}}. The idea is to generate network transmission only when the information conveyed by the message is deemed innovative to the system, and whether or not it is essential is determined via an event-triggered function that takes the deviation between the actual system state and the state just broadcast as an argument. 
The hope of event-triggered control is to reduce the communication load while largely preserving the control performance. 

Thanks to this attractive feature, event-triggered communication has been recently incorporated into decentralized optimization algorithms \cite{kajiyamadistributedsubgradient,hayashievent-triggered,lievent-triggered,liudistributedevent,liucommunication-censored,chenevent-triggered}. For example, the authors developed their event-triggered variants based on the decentralized optimization algorithm in \cite{nedicconstrainedconsensus}. Although reductions in communication were observed in numerical experiments, the convergence rates are rather slow: $\frac{\log k}{\sqrt{k}}$ in \cite{lievent-triggered}  and $\frac{1}{\log k}$ in \cite{kajiyamadistributedsubgradient}, where $k$ is the time counter, mainly due to the use of decaying stepsizes. To speed up convergence, constant stepsizes were used in event-triggered DGD \cite{liudistributedevent}. 
However, similar to standard DGD, the algorithm does not ensure exact minimization but only yields an accumulation point in a neighborhood of the global minimizer. Based on \cite{quharnessing}, the authors in \cite{hayashievent-triggered} solved this problem for strongly convex and smooth objective functions at the expense of maintaining an extra variable that tracks the global gradient using an event-triggered dynamic average consensus scheme. Recent work in \cite{liucommunication-censored} considered smooth and convex functions and presented an event-triggered decentralized ADMM that only requires each agent to route the decision variable to its neighbors and guarantees exact convergence. Convergence rates are analyzed for special strongly convex and smooth objectives. 
Furthermore, it is remarked in \cite{liucommunication-censored} that the event-triggered zero-gradient-sum decentralized optimization method in \cite{chenevent-triggered} can be seen as an event-triggered version of dual decomposition that is empirically slower than ADMM.
In these schemes, each agent at every generic time instant is required to exactly solve a subproblem, which may be not practical in most cases. 
\emph{Considering this, two questions naturally arise: 1) For general convex functions, is it possible to devise an event-triggered decentralized optimization algorithm that enjoys a competitive convergence rate even in the presence of node errors due to event-triggered communication? 2) If the objective functions exhibit some desired  properties, e.g., smooth or/and strongly convex, is it possible to simplify the subproblem-solving process to simple algebraic operations without sacrificing the convergence rate?}

We give affirmative answers to these questions in this work. First, the primal-dual methodology introduced earlier is used to tackle the decentralized optimization problem. More specifically, the linearized augmented Lagrangian method (LALM) in the recent work \cite{xuaccelerated} with a specific pre-conditioning strategy is used to design a periodic decentralized algorithm. 
Then, 
each agent employs an event-triggered broadcasting strategy to communicate with its neighbors
to avoid unnecessary network utilization.
{ 
Compared to the state-of-the-art, the developed event-triggered method features the following. 1) It ensures exact minimization with individual constant stepsizes to improve the speed that is usually dictated by the slowest agent in existing methods. This is made possible by adjusting the diagonal entries in the weight matrix to approximate the curvature of the objective.
2) It provides tailored local iteration rules for composite and purely smooth problems to ease computational burden. 3) Convergence rates for different types of objective functions have been established for the first time,
that is,
a convergence rate of $O(\frac{1}{k})$ for nonsmooth objective functions and a linear rate for strongly convex and smooth ones. 
To achieve this, a significantly different analysis from LALM is carried out since triggering schedulers inject errors into each iteration.
In particular, we established a new upper bound for the effect of errors on the primal-dual residual. Based on this, the same convergence rate $O(\frac{1}{k})$ with the standard primal-dual algorithm can be guaranteed for nonsmooth convex problems. }
{ It is worth to mention that recently the authors in \cite{licola} developed an event-triggered decentralized optimization method based on linearized ADMM, where subproblems are also not needed at each iteration. However, the results therein did not consider nonsmooth objectives and only presented convergence rate for strongly convex and smooth objectives. }


The notations adopted in this work are explained as follows.
We use $\mathbf{1}$ to denote a column vector with all entries being $1$, where the dimension shall be understood from the context.
Given $k_1,k_2\in\mathbb{N}$, the notation $\mathbb{N}_{[k_1,k_2]}$ represents the set $\{k\in\mathbb{N}|k_1\leq k\leq k_2\}$.
For a set $\mathcal{A}$, let $\lvert \mathcal{A} \rvert$ denote its cardinality.
Given a symmetric matrix $P$ and a column vector $x$, $P\succ0$ ($P\succeq0$) means that the matrix is positive (semi)definite; $\overline{\lambda}(P)$ and $\underline{\lambda}(P)$ stand for $P$'s maximum and minimum eigenvalues, respectively; $\big\lVert x\big\rVert$ represents the Euclidean norm;  $\big\lVert P\big\rVert$ is the corresponding induced norm; $\big\lVert x\big\rVert_P=\sqrt{x^\mathrm{T}Px}$ denotes the $P$-weighted norm. 
For a proper, lower-semicontinuous convex function $g: \mathbb{R}^m\rightarrow\mathbb{R}\cup\{+\infty\}$, the proximal operator associated with $g$  is defined by: $\mathrm{prox}_g(y)=\arg\min_x\{ f(x)+\frac{1}{2}\lVert y-x\rVert^2 \}$.
Finally, the Kronecker product of two matrices is denoted by $\otimes$.

\section{Problem Statement and Preliminaries}
\subsection{Problem statement}
This work considers the large-scale optimization problem given by
\begin{equation} \label{composite_optimization}
\min_{\theta\in\mathbb{R}^m}   \sum_{i=1}^{n}F_i(\theta)
\end{equation}
where $F_i: \mathbb{R}^m\rightarrow\mathbb{R}\cup\{+\infty\}$, $i\in\mathbb{N}_{[1,n]}$ represents the 
local objective function and $\theta$ the common decision variable.
{ 
In such problems, the number of local functions is usually large.}
Assume there are a group of agents that are connected via a network to solve \eqref{composite_optimization}, each of which, say $i$, has only access to $F_i$. 
In particular, the communication network is characterized by a simple undirected graph $\mathcal{G}=(\mathcal{V},\mathcal{E})$. Each node $i\in \mathcal{V}$ and edge $(i,j) \in \mathcal{E}$ in $\mathcal{G}$ stand for each agent $i$ and the communication channel between agents $i$ and $j$, respectively. 
Moreover, agent $j$ is said to be a neighbor of $i$ if $(i,j)\in\mathcal{E}$. We let $\mathcal{N}_i\subset\mathcal{V}$ denote the set of neighbors of $i$. 
The standard definitions of three $n\times n$ matrices are recalled:
The adjacency matrix $\mathcal{A}=[a_{ij}]$ where each entry $a_{ij}=1$ if $(i,j)\in \mathcal{E}$ and $a_{ij}=0$ otherwise, the diagonal degree matrix $\mathcal{D}={\rm diag}(\lvert\mathcal{N}_1 \rvert\cdots \lvert\mathcal{N}_n \rvert)$, and the graph Laplacian $\mathcal{L}=\mathcal{D}-\mathcal{A}$. 
Note that, for undirected graphs, the matrix $\mathcal{L}$ is ensured to be positive semidefinite. We make the following assumption for the communication graph.
\begin{assumption}\label{graphconnected}
	$\mathcal{G}=(\mathcal{V},\mathcal{E})$ is fixed and connected.
\end{assumption}

In this work, each local objective further assumes the following structure:
\begin{equation*}
F_i(\theta ) = f_i(\theta)+g_i(\theta), i\in\mathbb{N}_{[1,n]}
\end{equation*}
where $f_i: \mathbb{R}^m\rightarrow\mathbb{R}$ is smooth and convex while $g_i: \mathbb{R}^m\rightarrow\mathbb{R}\cup\{+\infty\}$ is convex but possibly non-differentiable. This problem, known as composite optimization { \cite{nesterovsmooth}}, finds wide applications in signal processing and cooperative control of multi-agent systems, e.g., the data fitting problem with $f_i$ being the loss function and $g_i$ the regularizer. By letting $f_i(\theta)=0$ or $g_i(\theta)=0, i\in\mathbb{N}_{[1,n]}$, the composite setting reduces to the special nonsmooth and smooth optimization.

{ 
	
	
	Our goal is to design an event-triggered decentralized first-order algorithm with individual constant stepsizes for the convex optimization problem in \eqref{composite_optimization} to save communication resources. In this framework, tailored local implementations are expected for composite and purely smooth problems to ease computational load. Furthermore, we will rigorously analyze the effect of triggering behavior on the iterates and therefore establish convergence rates for different types of objective functions.
	
}

\subsection{Primal-dual formulation of decentralized optimization}
Define $x=[x_1^{\mathrm{T}},\cdots,x_n^{\mathrm{T}}]^{\mathrm{T}}$,
$
F(x)=\sum_{i=1}^{n}F_i(x_i)$,
$f(x)=\sum_{i=1}^{n}f_i(x_i)$,
and $g(x)=\sum_{i=1}^{n}g_i(x_i)
$.
{ Following \cite{seidmanachebyshev},
the problem in \eqref{composite_optimization} can be equivalently written as the following linear equality-constrained optimization problem 
\begin{equation}\label{equality_constrained}
\begin{split}
\min_{{x}\in\mathbb{R}^{mn}} F({x})\quad\mathrm{s.t.} \quad\big( \sqrt{\mathcal{L}}\otimes I_m\big){x}=0.
\end{split}
\end{equation}
The augmented Lagrangian for \eqref{equality_constrained} is written as
\begin{equation*}
 F({x})+\Big\langle{y}, \big( \sqrt{\mathcal{L}}\otimes I_m\big){x}\Big\rangle+\frac{\beta}{2} \big\lVert{x}  \big\rVert^2_{ \mathcal{L}\otimes I_m}
\end{equation*}
where $y=[y_1^{\mathrm{T}},\cdots,y_n^{\mathrm{T}}]^{\mathrm{T}}\in \mathbb{R}^{mn}$ denotes the dual variable and $\beta>0$ a designable parameter.}
The KKT conditions can be identified as
\begin{subequations}\label{KKT}
	\begin{align}
	0&\in \partial F(x^*)+\big( \sqrt{\mathcal{L}}\otimes I_m\big){y}^*\label{KKT1}\\
	0&=\big( \sqrt{\mathcal{L}}\otimes I_m\big){x}^*  
	\end{align}
\end{subequations}
where $(x^*,y^*)$ is an optimal primal-dual pair and $\partial F(x^*)$ the set of all subgradients of $F$ evaluated at $x^*$.

In the remaining sections, we will focus on the case with $m=1$ to ease notation, i.e., $\mathbf{1}\otimes I_m=\mathbf{1}$, $\mathcal{L}\otimes I_m=\mathcal{L}$. The developed results can be extended to the case $m>1$ without much effort.


%
%

\section{Algorithm Development}
In this section, we develop a new periodic decentralized optimization algorithm and an event-triggered form of it to achieve a better tradeoff between network utilization and convergence speed. Some discussions about how this work relates to some recent works are presented.

\subsection{Development of an event-triggered decentralized optimization algorithm}
{ Based on the above primal-dual formulation, we recruit the LALM \cite{xuaccelerated} to solve the decentralized composite optimization problem in \eqref{composite_optimization}:
\begin{equation}\label{conditioned_LALM}
\begin{split}
{x}_{k+1} = & \arg \min_{{x}} \bigg\{ \Big\langle \triangledown f({{x}}_k)+\sqrt{\mathcal{L}}{y}_k,{x} \Big\rangle+g(x)  \\
&+\frac{\beta}{2} \big\lVert {x} \big\rVert_{\mathcal{L}}^2+\frac{1}{2} \big\lVert {x}-{x}_k  \big\rVert_{H-\beta\mathcal{L}}^2\bigg\}\\	
{y}_{k+1}=&{y}_k+\beta\sqrt{\mathcal{L}}{x}_{k+1},
\end{split}
\end{equation}
where the weight matrix used for the quadratic approximation of $f$ is designated as $H-\beta\mathcal{L}$ with $H=\mathrm{diag}(\eta_1,\cdots,\eta_n)\succ0$ being a diagonal matrix. Further by letting
$
{z}=\sqrt{\mathcal{L}}{y}
$ \cite{seidmanachebyshev,latafatnew,uribeadual},
the iteration rule becomes
\begin{equation*}
\begin{split}
{x}_{k+1} = & \arg \min_{{x}} \bigg\{ g(x)+\frac{1}{2}\big\lVert { x}-x_k\lVert_H^2 \\
&+\Big\langle \triangledown f({{x}}_k)+{z}_k+ {\beta} \mathcal{L}{x}_k,{x} \Big\rangle  \bigg\}\\
{z}_{k+1}=&{z}_k+\beta{\mathcal{L}}{x}_{k+1}.	
\end{split}
\end{equation*}}
Element-wisely,
\begin{equation}\label{original_iteration}
\begin{split}
{x}_{i,k+1} = & \arg \min_{{x}} \bigg\{ g_i(x)+\frac{\eta_i}{2}\lVert x-x_{i,k}\rVert^2 \\
&+\Big\langle \triangledown f_i({{x}}_{i,k})+{z}_{i,k}+\beta\sum_{j\in\mathcal{N}_i}\big({x}_{i,k}-{x}_{j,k}\big),{x} \Big\rangle	\bigg\} \\
		z_{i,k+1}=&z_{i,k}+\beta \sum_{j\in\mathcal{N}_i}\big({x}_{i,k+1}-{x}_{j,k+1}\big).
\end{split}
\end{equation}

It is straightforward to check that \eqref{original_iteration} can be executed in a fully decentralized fashion.
However, each agent has to broadcast its local estimate at every generic time instant $k$. Next, we exploit the fact that possibly at some time instants the progress locally made by each agent is not sufficiently significant to be sent out to save communication resources.

{ 
Denote the set of generic time instants by $\kappa =\{k|k\in\mathbb{N}\}$. It serves a global clock that synchronizes all agents. At each time $k$, we define the true and broadcast primal variable $x_{i,k}$ and $\tilde{x}_{i,k}$ for agent $i$. It is worth to mention that the variable $\tilde{x}_{i,k}$ is the same across all $j\in\mathcal{N}_i$.
Let $\kappa_i=\{k_i^l| l\in \mathbb{N}\}\subseteq \kappa$ be the set of triggering time instants for agent $i$, each of which obeys
\begin{equation} \label{triggeringrule}
{k}^{l+1}_{i} =\mathop{\min} \big \{k\in\kappa|k>{k}^{l}_{i},\big\lVert x_{i,{k}}-\tilde{x}_{i,{k-1}} \big\rVert >E_{i,k} \big \},
\end{equation}
where $k\geq 1$ and $E_{i,k}\geq0$ represents the triggering threshold.
The broadcast variable $\tilde{x}_{i,k}$ 
is defined as 
\begin{equation*} 
\tilde{x}_{i,{k}}=\begin{cases}
x_{i,{k}}, \quad \quad \; \; \, k\in\kappa_{i} \\
\tilde{x}_{i,{k-1}}, \quad {\rm otherwise} .
\end{cases}
\end{equation*}
If all the agents trigger network transmissions at time $k=0$, then $\kappa_i$ and $\tilde{x}_{i,k}$ are well-defined.
%
At each time $k$, each agent $i$ maintains the following variables:
\begin{enumerate}
	\item $i$'s local primal variable: $x_{i,k}$;
	\item $i$'s local dual variable: $z_{i,k}$;
 	\item  local primal variable broadcast by $i$: $\tilde{x}_{i,k}$;
 	\item local primal variable broadcast by $j$: $\tilde{x}_{j,k}, j\in\mathcal{N}_i$.
\end{enumerate}}

As briefly explained earlier, the degree of innovation of $x_{i,k}$ to the overall system is measured by the deviation between it and the estimate broadcast most recently, i.e., $\tilde{x}_{i,k}$. If the gap is large enough, $x_{i,k}$ will be deemed as novel and sent out. 
It can be verified from the definition that the deviation between $x_{i,k}$ and $\tilde{x}_{i,k}$ is always bounded from above by $E_{i,k}$, that is,
\begin{equation*}
\big\lVert x_{i,{k}}-\tilde{x}_{i,{k}} \big\rVert \leq E_{i,k}.
\end{equation*}
For the triggering threshold, we make the following assumption.
\begin{assumption}\label{trigthresum}
	Define $E_{k}=\max_{i\in\mathbb{N}_{[1,n]}}E_{i,k}$ for all $k\in\mathbb{N}$. $E_{k}$ is non-increasing and summable, i.e.,
	$ \sum_{k=0}^{\infty}E_{k}<\infty$.
\end{assumption}

\newtheorem{remark}{Remark}
\begin{remark}
	{ Assumption \ref{trigthresum} assumes that the threshold sequence should converge sufficiently fast. This is necessarily made to ensure exact convergence of the algorithm.}
	Examples of such sequences include $\{\frac{E_0}{k^p}\}$ where $p>1$ and $\{E_0\rho^k\}$ where $\rho<1$. 
		The non-increasing property of $E_k$ assumed in Assumption \ref{trigthresum} can be relaxed to that $E_k$ is upper bounded by a non-increasing and summable sequence without affecting the theoretical results given later. { Essentially, Assumption \ref{trigthresum} requires the bound of individual triggering thresholds to be asymtotically decreasing, implying that the event-triggered scheme asymptotically converges to its periodic counterpart. Throughout this work, we assumed that there is a global clock that synchronizes all agents. This can be satisfied by an initialization step without losing the decentralized structure.
			Based on it, each agent can construct its own triggering scheduler $E_{i,k}$ that satisfies Assumption \ref{trigthresum}. Then $E_k$ becomes the maximum of them, and Assumption \ref{trigthresum} holds true.}
\end{remark}

Based on the above communication pattern, an event-triggered decentralized optimization algorithm is formulated in Algorithm \ref{event_triggered_algorithm}.

\begin{algorithm}
	\begin{algorithmic}[1]
		\caption{ Event-triggered decentralized optimization algorithm}
		\label{event_triggered_algorithm}
		\STATE  Set $k=0$, { $z_{i,0}=0, \forall i\in\mathcal{V}$};
		each agent $i\in\mathcal{V}$ broadcasts $x_{i,0}$ to its neighbors

		\FOR{each agent $i\in\mathcal{V}$}  
		\STATE  Update primal variable
		\begin{equation}\label{event-triggered variant}
		\begin{split}
		&{x}_{i,k+1} =  \arg \min_{{x}} \bigg\{ g_i(x)+\frac{\eta_i}{2}\lVert x-x_{i,k}\rVert^2 \\
		&+\Big\langle \triangledown f_i({{x}}_{i,k})+{z}_{i,k}+\beta\sum_{j\in\mathcal{N}_i}\big(\tilde{x}_{i,k}-\tilde{x}_{j,k}\big),{x} \Big\rangle	\bigg\} 		
		\end{split}
		\end{equation}
		\STATE Test the event condition in \eqref{triggeringrule};
		\IF {triggered}
		\STATE Broadcast $x_{i,{k+1}}$ to its neighbors;
		\ENDIF
		\STATE  Update dual variable
		\begin{equation*}
		z_{i,k+1}=z_{i,k}+\beta \sum_{j\in\mathcal{N}_i}\big(\tilde{x}_{i,k+1}-\tilde{x}_{j,k+1}\big)
		\end{equation*}
		\ENDFOR 
		\STATE Set $k = k+1$.
	\end{algorithmic}
\end{algorithm}

Note that if the overall objective has Lipschitz continuous gradients, i.e., $g(x)=0$, the iteration for primal variable can be further simplified. To see this, we set $g_i(x)=0$ and can get
the following closed-form solution for \eqref{event-triggered variant}:
\begin{equation}\label{primal_update_smooth}
\begin{split}
x_{i,k+1}=    x_{i,k} -\frac{1}{\eta_i}\Big({z}_{i,k}+\triangledown f_i({{x}}_{i,k})+\beta \sum_{j\in\mathcal{N}_i}\big(\tilde{x}_{i,k}-\tilde{x}_{j,k}\big) \Big).
\end{split}
\end{equation}
This helps lower down the computational load required to exactly solve a minimization subproblem when the objective is smooth. In addition, the algorithm also works for completely nonsmooth functions, i.e., $f(x)=0$, with the following modifications in Step 3:
\begin{equation*}
\begin{split}
{x}_{i,k+1} = & \arg \min_{{x}} \bigg\{ g_i(x)+\frac{\eta_i}{2}\lVert x-x_{i,k}\rVert^2 \\
&+\Big\langle {z}_{i,k}+\beta\sum_{j\in\mathcal{N}_i}\big(\tilde{x}_{i,k}-\tilde{x}_{j,k}\big),{x} \Big\rangle	\bigg\}. 
\end{split}
\end{equation*}
{ 
\begin{remark}
	Note that the iteration rule in \eqref{event-triggered variant} can be rewritten as
	\begin{equation*}
	\begin{split}
	&x_{i, k+1} =\\ 
	&   \mathrm{prox}_{\frac{1}{\eta_i}g_i}\Big( x_{i,k}-\frac{z_{i,k}}{\eta_i}-\frac{\triangledown f_i(x_{i,k})}{\eta_i} -\frac{ \beta}{\eta_i} \sum_{j\in\mathcal{N}_i}(\tilde{x}_{i,k}-\tilde{x}_{j,k}) \Big),
	\end{split}
	\end{equation*}
	which helps make the implementation much easier in some common optimization problems, e.g., $l_1$-regularized problems. 
	From this perspective, the periodic algorithm in \eqref{original_iteration} can be seen as a proximal gradient descent step with stepsize $\frac{1}{\eta_i}$ applied on the augmented Lagrangian followed by a dual gradient ascent step with stepsize $\beta$. The method in \cite{shiapro} has the same primal-dual structure, as explained in \cite{liaprimal}.
\end{remark}}

{ 
\begin{remark}
	In each iteration, each agent employs the
	stale iterate $\tilde{x}_{i,k}$ instead of the updated one ${x}_{i,k}$ to construct $\sum_{j\in\mathcal{N}_i}\big(\tilde{x}_{i,k}-\tilde{x}_{j,k}\big)$. This is mainly motivated by the fact that
	in purely event-triggered consensus 
	the use of $\tilde{x}_{i,k}$ helps conserve the average of variables:
	\begin{equation*}
		\mathbf{1}^{\mathrm{T}}x_{k+1} = \mathbf{1}^{\mathrm{T}}\big(x_{k}-\beta\mathcal{L}\tilde{x}_k\big) =\mathbf{1}^{\mathrm{T}}x_k.
	\end{equation*}
If we use $\sum_{j\in\mathcal{N}_i}\big({x}_{i,k}-\tilde{x}_{j,k}\big)$ in the iteration, then according to the optimality condition we have
\begin{equation*}
\begin{split}
0 =& \triangledown f({x}_k)+z_k+\tilde{\triangledown} g(x_{k+1}) +H (x_{k+1}-x_k) \\
&+\beta \mathcal{L}\tilde{x}_k-\beta \mathcal{D}e_k\\
0 =& z_{k+1}-z_k-{\beta}\mathcal{L}\tilde{x}_{k+1}+\beta \mathcal{D}e_{k+1}
\end{split}
\end{equation*}
and therefore
\begin{equation*}
\begin{split}
0  =&\triangledown f({x}_k)+\tilde{\triangledown} g(x_{k+1})+z_{k+1}  +P\big( x_{k+1}-x_k \big)\\
&+\beta \mathcal{A}\big( e_{k+1}-e_k\big).
\end{split}
\end{equation*}	
In light of this, we can conclude that the main results developed in this work remain valid with a slightly different definition for ``$a$" used to describe the optimization error in Theorem \ref{Convergence_smooth}.

\end{remark}}


\subsection{Connection with the event-triggered ADMM \cite{liucommunication-censored}}
The proposed method generalizes the recent work in \cite{liucommunication-censored}. To see this, we recall the decentralized ADMM in \cite{shionthelinear}
\begin{equation*}
\begin{split}
x_{k+1}=& (2c\mathcal{D}+\partial F)^{-1}\big( c(\mathcal{D}+\mathcal{A})x_k-z_k \big) \\
z_{k+1}=& z_k+c(\mathcal{D}-\mathcal{A})x_{k+1}
\end{split}
\end{equation*}
and its event-triggered version in \cite{liucommunication-censored}
\begin{equation}\label{communication-censored ADMM}
\begin{split}
x_{k+1}=& (2c\mathcal{D}+\partial F)^{-1}\big( c(\mathcal{D}+\mathcal{A})\tilde{x}_k-z_k \big) \\
z_{k+1}=& z_k+c(\mathcal{D}-\mathcal{A})\tilde{x}_{k+1}
\end{split}
\end{equation}
where $c>0$ is the stepsize. Since $\mathcal{L}=\mathcal{D}-\mathcal{A}$, we equivalently express \eqref{communication-censored ADMM} as

\begin{equation*}
\begin{split}
x_{k+1}=& (2c\mathcal{D}+\partial F)^{-1}\big( c(2\mathcal{D}-\mathcal{L})\tilde{x}_k-z_k \big) \\
z_{k+1}=& z_k+c\mathcal{L}\tilde{x}_{k+1}
\end{split}
\end{equation*}
and therefore
\begin{equation}\label{proximal_form_ADMM}
\begin{split}
x_{k+1}=& (cI_n+\frac{\mathcal{D}^{-1}}{2}\partial F)^{-1}\big( c\tilde{x}_k-\frac{c}{2}\mathcal{D}^{-1}\mathcal{L}\tilde{x}_k- \frac{\mathcal{D}^{-1}}{2}z_k \big) \\
\frac{\mathcal{D}^{-1}}{2}z_{k+1}=& \frac{\mathcal{D}^{-1}}{2}z_k+\frac{c}{2}\mathcal{D}^{-1}\mathcal{L}\tilde{x}_{k+1}.
\end{split}
\end{equation}
For the proposed algorithm,  according to the optimality condition \eqref{event-triggered variant}
\begin{equation*}
	0  \in \triangledown f({x}_k)+z_k+\partial  g(x_{k+1}) -H x_k +\beta \mathcal{L}\tilde{x}_k+H x_{k+1},
\end{equation*}
we have
\begin{equation}\label{proximal_form}
x_{k+1} = (H+\partial g)^{-1}\big(H x_k-\triangledown f({x}_k)-z_k-\beta \mathcal{L}\tilde{x}_k\big).
\end{equation}
By comparing \eqref{proximal_form} and \eqref{proximal_form_ADMM}, we can see that the event-triggered ADMM in \cite{liucommunication-censored} is very similar to the proposed method when the smooth part of the objective $f(x)=0$, and the only difference is that the Laplacian matrix is normalized to $\frac{c}{2}\mathcal{D}^{-1}\mathcal{L}$, the dual variable is scaled to $\frac{\mathcal{D}^{-1}}{2}z$, and the proximal parameter is changed to $(cI_n+\frac{\mathcal{D}^{-1}}{2}\partial F)^{-1}$. However, this work further establishes convergence rate for nonsmooth convex functions while \cite{liucommunication-censored} only proved convergence, and provides tailored implementations for composite and smooth objectives to reduce computational load.

\section{Main Results}
\subsection{Convergence rate for composite objective functions}

\begin{assumption}\label{Lipschitz}
	$f_i$ is a convex and Lipschitz differentiable function with positive parameter $l_{f_i}$, i.e.,
	\begin{equation*}
	\big\lVert\triangledown f_i(x)- \triangledown f_i(y) \big\rVert\leq l_{f_i} \big\lVert x-y  \big\rVert, \forall x,y\in\mathbb{R}.
	\end{equation*}
\end{assumption}
By Assumption \ref{Lipschitz} and the Cauchy-Schwartz inequality, we have that the gradient of $f(x)$ satisfies
\begin{equation*}
\langle \triangledown f(x)- \triangledown f(y)
,x-y\rangle\leq  \lVert x-y \rVert^2_{L_f}, \forall x,y\in\mathbb{R}^{n},
\end{equation*}
where $L_f=\mathrm{diag}(l_{f_1},\cdots,l_{f_n})$.
Then, we examine the convergence rate for Algorithm \ref{event_triggered_algorithm} with Assumptions \ref{graphconnected}-\ref{Lipschitz} satisfied.

\newtheorem{theorem}{$Theorem$}
\begin{theorem}\label{Convergence_smooth}
	If Assumptions \ref{graphconnected}-\ref{Lipschitz} hold and
\begin{equation*}
P-{L_f}\succ0,
\end{equation*}
then 
	\begin{equation}\label{error_rate}
	\begin{split}
	\big	\lVert \sqrt{\mathcal{L}}{ \hat{x}}_{t} \big\rVert \leq   \frac{\big({\big\lVert x_{0}-x^* \big\rVert_{P} 
			+\rho\big\lVert {\mathcal{L}(\beta \mathcal{L}+\frac{\mathbf{1}\mathbf{1}^{\mathrm{T}}}{n})^{-1}} \big\rVert}+\sqrt{2b}A_t \big)^2}{2t\big(\rho-\big\lVert  y^*\big\rVert\big)},
	\end{split}
	\end{equation} 
	and 
	\begin{equation}\label{objective_rate}
	\begin{split}
	&- \frac{\big\lVert y^* \big\rVert\big({\big\lVert x_{0}-x^* \big\rVert_{P} 
			+\rho\big\lVert {\mathcal{L}(\beta \mathcal{L}+\frac{\mathbf{1}\mathbf{1}^{\mathrm{T}}}{n})^{-1}} \big\rVert}+\sqrt{2b}A_t \big)^2}{2t\big(\rho-\big\lVert  y^*\big\rVert\big)}\\ &\leq  F({ \hat{x}}_t)-F(x^*) \\
	&	\leq  \frac{\big({\big\lVert x_{0}-x^* \big\rVert_{P} 
			+\rho\big\lVert {\mathcal{L}(\beta \mathcal{L}+\frac{\mathbf{1}\mathbf{1}^{\mathrm{T}}}{n})^{-1}} \big\rVert}+\sqrt{2b}A_t \big)^2}{2t},
	\end{split}
	\end{equation}
	where 	$
	P = H-\beta\mathcal{L}
	$, ${ \hat{x}}_t=\frac{1}{t}\sum_{k=1}^{t}x_{k}$, { $\big\lVert y^* \big\rVert<\rho <\infty$}, 
 $
	A_t=\frac{2a\sqrt{n}}{b}\sum_{k=1}^{t}E_{k-1}
$, $a=\max\big\{2\beta\overline{\lambda}(\mathcal{L}),1\big\}$, and $b=\min\big\{\underline{\lambda}(L_f),\frac{1}{\overline{\lambda}(\beta \mathcal{L}+\frac{\mathbf{1}\mathbf{1}^{\mathrm{T}}}{n})}\big\}$.
\end{theorem}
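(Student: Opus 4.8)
The plan is to treat Algorithm~\ref{event_triggered_algorithm} as an inexact realization of the LALM recursion \eqref{conditioned_LALM} and to carry the triggering error $e_k := \tilde{x}_k - x_k$, which obeys $\lVert e_k\rVert \le \sqrt{n}\,E_k$, explicitly through the analysis. First I would write the first-order optimality condition of the primal subproblem \eqref{event-triggered variant}, i.e. $0 = \triangledown f(x_k) + \tilde{\triangledown} g(x_{k+1}) + z_k + \beta\mathcal{L}\tilde{x}_k + H(x_{k+1}-x_k)$ for some $\tilde{\triangledown} g(x_{k+1})\in\partial g(x_{k+1})$, substitute $\tilde{x}_k = x_k+e_k$, and use the dual update together with $P=H-\beta\mathcal{L}$ to reach the error-perturbed inclusion $0 = \triangledown f(x_k)+\tilde{\triangledown} g(x_{k+1})+z_{k+1}+P(x_{k+1}-x_k)+\beta\mathcal{L}(e_k-e_{k+1})$. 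Pairing this identity with $x_{k+1}-x^*$, using convexity of $g$, the $L_f$-weighted descent bound for $f$ furnished by Assumption~\ref{Lipschitz}, and the polarization identity $2\langle P(x_{k+1}-x_k),x_{k+1}-x^*\rangle=\lVert x_{k+1}-x^*\rVert_P^2-\lVert x_k-x^*\rVert_P^2+\lVert x_{k+1}-x_k\rVert_P^2$, I would obtain a per-iteration bound on the Lagrangian gap $F(x_{k+1})-F(x^*)+\langle y,\sqrt{\mathcal{L}}x_{k+1}\rangle$ (recalling $z=\sqrt{\mathcal{L}}y$ and $\sqrt{\mathcal{L}}x^*=0$) made of telescoping primal quadratics in $\lVert\cdot\rVert_P$, a nonpositive residual $-\tfrac12\lVert x_{k+1}-x_k\rVert_{P-L_f}^2$, which is exactly where $P-L_f\succ0$ enters, and the error contribution $\langle\beta\mathcal{L}(e_{k+1}-e_k),x_{k+1}-x^*\rangle$.

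Next I would account for the dual variable by writing $\langle y_{k+1},\sqrt{\mathcal{L}}x_{k+1}\rangle = \langle y,\sqrt{\mathcal{L}}x_{k+1}\rangle+\langle y_{k+1}-y,\sqrt{\mathcal{L}}x_{k+1}\rangle$ for an arbitrary fixed multiplier $y$ and invoking the dual recursion $y_{k+1}=y_k+\beta\sqrt{\mathcal{L}}\tilde{x}_{k+1}$; the cross term telescopes in the dual metric while producing one further error term $\langle y_{k+1}-y,\sqrt{\mathcal{L}}e_{k+1}\rangle$. Summing the resulting per-iteration inequality over $k=0,\dots,t-1$ and applying Jensen's inequality with the convexity of $F$ transfers the left-hand side to the ergodic iterate $\hat{x}_t=\tfrac1t\sum_{k=1}^t x_k$. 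The primal quadratics collapse to $\tfrac12\lVert x_0-x^*\rVert_P^2$, the residual terms are discarded, and the dual initialization, using $z_0=0$ and converting the $z$-variable telescoping to the comparison ball $\lVert y\rVert\le\rho$ through the regularized inverse $(\beta\mathcal{L}+\tfrac{\mathbf{1}\mathbf{1}^{\mathrm{T}}}{n})^{-1}$ of the singular $\beta\mathcal{L}$, contributes the length $\rho\lVert\mathcal{L}(\beta\mathcal{L}+\tfrac{\mathbf{1}\mathbf{1}^{\mathrm{T}}}{n})^{-1}\rVert$.

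The crux of the proof, which I expect to be the main obstacle, is controlling the two accumulated error sums, since each couples the error to the \emph{a priori} unknown distance $\lVert x_{k+1}-x^*\rVert$ and a naive bound would be circular. I would handle the telescoping differences $e_k-e_{k+1}$ by summation by parts, bound the operator norms of the primal- and dual-coupled errors (producing $a=\max\{2\beta\overline{\lambda}(\mathcal{L}),1\}$), and use $\lVert e_k\rVert\le\sqrt{n}E_k$ to isolate the sum $\sum_{k}E_{k-1}$ while \emph{retaining} the leftover primal term $\tfrac12\lVert x^*-x_t\rVert_P^2$ on the left rather than discarding it. This yields a self-referential inequality of the form $u_t^2\le S_t+\sum_{k\le t}\lambda_k u_k$, where $u_k$ gauges $\lVert x_k-x^*\rVert_P$, $S_t$ gathers the primal and dual initialization lengths, and $\sum_k\lambda_k=\sqrt{2b}A_t$ with $b=\min\{\underline{\lambda}(L_f),1/\overline{\lambda}(\beta\mathcal{L}+\tfrac{\mathbf{1}\mathbf{1}^{\mathrm{T}}}{n})\}$ the constant converting the weighted quadratics. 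Applying the standard discrete lemma for inexact first-order methods, namely that $u_t^2\le S_t+\sum_{k\le t}\lambda_k u_k$ with $S_t$ nondecreasing implies $u_t\le\sqrt{S_t}+\sum_{k\le t}\lambda_k$, collapses everything into the single squared quantity $\big(\lVert x_0-x^*\rVert_P+\rho\lVert\mathcal{L}(\beta\mathcal{L}+\tfrac{\mathbf{1}\mathbf{1}^{\mathrm{T}}}{n})^{-1}\rVert+\sqrt{2b}A_t\big)^2$ appearing in the numerators, with Assumption~\ref{trigthresum} guaranteeing $A_t$ stays finite.

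Finally I would turn the uniform bound $F(\hat{x}_t)-F(x^*)+\langle y,\sqrt{\mathcal{L}}\hat{x}_t\rangle\le\tfrac{1}{2t}(\cdots)^2$, valid for all $\lVert y\rVert\le\rho$, into the two stated estimates. Taking the supremum over the ball gives $\sup_{\lVert y\rVert\le\rho}\langle y,\sqrt{\mathcal{L}}\hat{x}_t\rangle=\rho\lVert\sqrt{\mathcal{L}}\hat{x}_t\rVert$; combining with the saddle-point lower bound $F(\hat{x}_t)-F(x^*)\ge-\langle y^*,\sqrt{\mathcal{L}}\hat{x}_t\rangle\ge-\lVert y^*\rVert\,\lVert\sqrt{\mathcal{L}}\hat{x}_t\rVert$ read off from the KKT system \eqref{KKT} yields $(\rho-\lVert y^*\rVert)\lVert\sqrt{\mathcal{L}}\hat{x}_t\rVert\le\tfrac{1}{2t}(\cdots)^2$, which is \eqref{error_rate}; here the hypothesis $\lVert y^*\rVert<\rho$ is precisely what keeps the denominator positive. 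Substituting this constraint-violation bound back into the same saddle-point inequality gives the lower bound in \eqref{objective_rate}, while setting $y=0$ in the uniform bound gives its upper bound, completing the argument.
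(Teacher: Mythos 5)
Your proposal is correct and follows essentially the same route as the paper: the perturbed optimality inclusion carrying $\beta\mathcal{L}(e_k-e_{k+1})$, telescoping in the $P$-metric and the $\big(\beta\mathcal{L}+\frac{\mathbf{1}\mathbf{1}^{\mathrm{T}}}{n}\big)^{-1}$-metric, resolution of the circular error--distance coupling via the discrete lemma for inexact proximal-gradient methods (the paper's Lemma \ref{primal-dual_bound}, citing \cite{schmidtconvergence}), and the supremum over the $\rho$-ball combined with the KKT saddle-point lower bound to extract \eqref{error_rate} and \eqref{objective_rate}. The only cosmetic deviations---term-by-term bounding of $e_k-e_{k+1}$ under monotone $E_k$ rather than summation by parts, and taking $y=0$ instead of simply dropping the nonnegative term $\rho\big\lVert\sqrt{\mathcal{L}}\hat{x}_t\big\rVert$ for the upper objective bound---do not change the argument.
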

\begin{proof}
Please refer to Appendix A.
\end{proof}

The results in Theorem \ref{Convergence_smooth} are explained in what follows.

\emph {1) Comparison of sufficient conditions with \cite{liucommunication-censored}:}
Theorem \ref{Convergence_smooth} states that both the consensus error $\big\lVert  \sqrt{\mathcal{L}}{ \hat{x}}_t\big\rVert$ and the objective error $F({ \hat{x}}_t)-F(x^*)$ converge to zero at an ergodic convergence rate of $O(\frac{1}{t})$ if some reasonable assumptions hold true. 
It is worth to mention that the result remains valid for smooth objective functions, i.e.,  $g(x)=0$, with a much simplified iteration rule in \eqref{primal_update_smooth}. For completely nonsmooth objective functions, i.e., $f(x)=0$, the condition for stepsize to ensure the same convergence rate is relaxed to 
\begin{equation*}
H-\beta\mathcal{L}\succ 0.
\end{equation*}
Note that the diagonal matrix $H$ allows the use of different stepsizes for agents, depending on the local Lipschitz modulus.
By setting $H=\eta I_n$, the sufficient condition reduces to
\begin{equation*}
\eta > \beta \overline{\lambda}(\mathcal{L}).
\end{equation*}
When the free parameter $\beta$ approaches zero,
this condition becomes equivalent to that in \cite{liucommunication-censored} for convergence.

{ 
\emph {2) Impact of free parameter $\rho$:}
		Theoretically speaking, the $\rho$ in Theorem 1 can be any constant that is larger than the $2$-norm of optimal dual variable $y^*$. If $\rho\rightarrow \infty$, then one has
		\begin{equation*}
		\begin{split}
		&\lim\limits_{\rho\rightarrow\infty}\big	\lVert \sqrt{\mathcal{L}}{ \hat{x}}_{t} \big\rVert \\
		\leq& \lim\limits_{\rho\rightarrow\infty}   \frac{\big({\big\lVert x_{0}-x^* \big\rVert_{P} 
				+\rho\big\lVert {\mathcal{L}(\beta \mathcal{L}+\frac{\mathbf{1}\mathbf{1}^{\mathrm{T}}}{n})^{-1}} \big\rVert}+\sqrt{2b}A_t \big)^2}{2t\big(\rho-\big\lVert  y^*\big\rVert\big)}\\
		=&  \lim\limits_{\rho\rightarrow\infty} \frac{\big( { \frac{\big\lVert x_{0}-x^* \big\rVert_{P} }{\rho}
				+\big\lVert {\mathcal{L}(\beta \mathcal{L}+\frac{\mathbf{1}\mathbf{1}^{\mathrm{T}}}{n})^{-1}} \big\rVert}+\frac{\sqrt{2b}A_t}{\rho} \big)^2}{2t\big(\frac{1}{\rho}-\frac{\big\lVert  y^*\big\rVert}{\rho^2}\big)}\\
		=&\infty.
		\end{split}
		\end{equation*} 
		This implies that generally a larger $\rho$ leads to a looser upper bound for $\lVert \sqrt{\mathcal{L}}{\hat{x}}_{t} \big\rVert$ and when $\rho$ approaches $\infty$ the bound becomes meaningless. However, as long as $\rho$ is finite, the bound is in the order of $O(\frac{1}{t})$. By following the same line of reasoning, similar results hold for the bound on $\lvert F({\hat{x}}_t)-F(x^*)\rvert$.}

{ 
\emph {3) Choices of $\beta$, $H$ and $E_k$:}
	Theorem \ref{Convergence_smooth} reveals that, given a graph Laplacian, a larger $\beta$ leads to an $H$ with larger diagonal entries that is used in the quadratic approximation. If $H$ over-approximates the curvature of $f$ in \eqref{conditioned_LALM}, the convergence of primal variables $x_k$ will be slow. However, if $\eta_i$ is too small and under-approximates the curvature, the primal iterate may oscillate quickly.
	In practice, the designable matrix $H$ and parameter $\beta$ should be carefully tuned to achieve a reasonable convergence rate. 
An appropriate choice would be to set $\beta=\frac{1}{\overline{\lambda}(\mathcal{L})+1}$ and $H= I_n+L_f$. Theorem \ref{Convergence_smooth} also suggests that the threshold sequence $E_k$ will affect convergence. In particular, a more slowly decreasing $E_k$ satisfying Assumption \ref{trigthresum} will result in a larger $A_t$ and therefore a larger base in convergence constants. For composite optimization, one can set a threshold sequence that converges slightly faster than the guaranteed rate $O(\frac{1}{t})$, e.g., $\frac{E_{i,0}}{t^2}$. In addition, a base constant $E_{i,0}$ that is sufficiently smaller than the magnitude of ${z}_{i,0}+\triangledown f_i({{x}}_{i,0})+\beta \sum_{j\in\mathcal{N}_i}\big({x}_{i,0}-{x}_{j,0}\big)$ is suggested to prevent wild oscillation in the beginning.}

\newtheorem{lemma}{$Lemma$}

\subsection{Linear convergence for strongly convex and smooth objective functions}
This subsection considers strongly convex and smooth objective functions, for which stronger convergence results can be expected. Formally, the following assumption is made for the objective functions.
\begin{assumption}\label{strongly convex}
	For all $i\in\mathbb{N}_{[1,n]}$, $g_i(\theta)=0$ and $f_i$ is strongly convex with positive parameter $\mu_{f_i}$, i.e.,
	\begin{equation*}
	\langle \triangledown f_i(x)- \triangledown f_i(y),x-y \rangle 
\geq \mu_{f_i} \big\lVert x-y  \big\rVert^2, \forall x,y\in\mathbb{R}.
	\end{equation*}
\end{assumption}
As a direct consequence, the gradient of $f(x)$ satisfies
	\begin{equation*}
\langle \triangledown f(x)- \triangledown f(y),x-y \rangle 
\geq\big\lVert x-y  \big\rVert_M^2, \forall x,y\in\mathbb{R}^n,
\end{equation*}
where $M=\mathrm{diag}(\mu_{f_1},\cdots,\mu_{f_n})$.

\begin{theorem}\label{linear_convergence}
	If Assumptions \ref{graphconnected}-\ref{strongly convex} hold and
\begin{equation}\label{stepsize_stronglyconvex}
P-\frac{L_f^2}{k_1}\succ0
\end{equation}
	for some $0<k_1<2\underline{\lambda}(M)$, then there exists some positive $\sigma$ such that
	\begin{equation}\label{linear convergence}
	\begin{split}
	&\frac{1}{2}\big\lVert x_{k}-x^* \big\rVert_{P +k_4Q}^2+\frac{1}{2}\big\lVert z_{k}-z^*\big\rVert_{\big(\beta \mathcal{L}+\frac{\mathbf{1}\mathbf{1}^{\mathrm{T}}}{n}\big)^{-1}}^2 +nC E_k^2 \\ &\geq 
	\frac{\sigma+1}{2} \Big(\big\lVert x_{k+1}-x^* \big\rVert_{P +k_4Q}^2    + \big\lVert z^*-z_{k+1}\big\rVert_{\big(\beta \mathcal{L}+\frac{\mathbf{1}\mathbf{1}^{\mathrm{T}}}{n}\big)^{-1}}^2\Big)
	\end{split}
	\end{equation}
	where 
	$
	P = H-\beta\mathcal{L}, Q = 2M-k_1I_n
	$,$0<k_2$, $2<k_3$, $0< k_4< 1$, $0<k_5$,
	and
	\begin{equation*}
	\begin{split}
	C =&\bigg(\frac{2\big({k_3}+\frac{1}{k_2} -1 \big)(\sigma+k_5) \overline{\lambda}(\beta^2\mathcal{L}^2)}{(	1-\frac{2}{k_3})\underline{\lambda}(\beta \mathcal{L}+\frac{\mathbf{1}\mathbf{1}^{\mathrm{T}}}{n}) }    + \frac{\overline{\lambda}(\beta \mathcal{L}+\frac{\mathbf{1}\mathbf{1}^{\mathrm{T}}}{n})}{2k_5} \\
	&+ \frac{2\overline{\lambda}(\beta^2\mathcal{L}^2)}{k_5\underline{\lambda}(P +k_4Q)} \bigg).
	\end{split}
	\end{equation*} 
\end{theorem}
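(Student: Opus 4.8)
The plan is to establish the one-step contraction \eqref{linear convergence} by an energy (Lyapunov) argument on $V_k := \frac12\lVert x_k - x^*\rVert^2_{P+k_4Q} + \frac12\lVert z_k - z^*\rVert^2_{W^{-1}}$, where $W := \beta\mathcal{L} + \frac{\mathbf{1}\mathbf{1}^{\mathrm{T}}}{n}$. Since $g=0$ under Assumption \ref{strongly convex}, the primal step reduces to the stationarity condition $0 = \nabla f(x_k) + z_k + \beta\mathcal{L}\tilde x_k + H(x_{k+1}-x_k)$ and the dual step is $z_{k+1} = z_k + \beta\mathcal{L}\tilde x_{k+1}$. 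Writing $\hat x_k := x_k - x^*$, $\hat z_k := z_k - z^*$ and the triggering error $e_k := \tilde x_k - x_k$ (so $\lVert e_k\rVert^2 \le n E_k^2$ because $\lVert e_{i,k}\rVert\le E_{i,k}\le E_k$), and subtracting the KKT identities \eqref{KKT} (which give $\nabla f(x^*) + z^* = 0$ and $\mathcal{L}x^* = 0$), I would first derive the error dynamics
\begin{equation*}
\begin{aligned}
&0 = \big(\nabla f(x_k) - \nabla f(x^*)\big) + \hat z_k + \beta\mathcal{L}e_k + H\hat x_{k+1} - P\hat x_k,\\
&\hat z_{k+1} - \hat z_k = \beta\mathcal{L}\hat x_{k+1} + \beta\mathcal{L}e_{k+1},
\end{aligned}
\end{equation*}
recalling $P = H - \beta\mathcal{L}$. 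A preliminary observation is that $z_k$ and $z^*$ both lie in the range of $\mathcal{L}$ (equivalently, are orthogonal to $\mathbf{1}$), so $\hat z_k \perp \mathbf{1}$ and $W^{-1}$ acts on such vectors exactly as the pseudoinverse of $\beta\mathcal{L}$; this makes the $W^{-1}$-weighted dual norm well defined and lets the Laplacian couple cleanly with the dual update.

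Next I would take the inner product of the primal error relation with $2\hat x_{k+1}$ and reorganize the three structural groups. The crucial manipulation is the cross term $2\langle \hat z_k, \hat x_{k+1}\rangle$: using the dual relation to write $\beta\mathcal{L}\hat x_{k+1} = (\hat z_{k+1}-\hat z_k) - \beta\mathcal{L}e_{k+1}$ together with the pseudoinverse identity above, one converts it into the telescoping dual energy $\lVert \hat z_{k+1}\rVert^2_{W^{-1}} - \lVert \hat z_k\rVert^2_{W^{-1}} - \lVert \hat z_{k+1}-\hat z_k\rVert^2_{W^{-1}}$ plus an error inner product carrying $e_{k+1}$. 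The $H$-weighted primal terms $2\lVert \hat x_{k+1}\rVert^2_H - 2\langle P\hat x_k,\hat x_{k+1}\rangle$ expand, via polarization and $H = P + \beta\mathcal{L}$, into $\lVert \hat x_{k+1}\rVert^2_P - \lVert \hat x_k\rVert^2_P + \lVert x_{k+1}-x_k\rVert^2_P$ together with Laplacian pieces that partly cancel against $-\lVert \hat z_{k+1}-\hat z_k\rVert^2_{W^{-1}}$. Collecting everything yields an exact expression for $V_{k+1} - V_k$ as a quadratic form in $\hat x_k$, $x_{k+1}-x_k$, $\hat x_{k+1}$, $\hat z_{k+1}$ plus the gradient cross term and the triggering-error terms.

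I would then lower-bound the indefinite pieces. The gradient cross term $2\langle \nabla f(x_k)-\nabla f(x^*),\hat x_{k+1}\rangle$ is split as $2\langle \nabla f(x_k)-\nabla f(x^*),\hat x_k\rangle + 2\langle \nabla f(x_k)-\nabla f(x^*),x_{k+1}-x_k\rangle$; strong convexity (Assumption \ref{strongly convex}) bounds the first by $2\lVert \hat x_k\rVert^2_M$, while Young's inequality with parameter $k_1$ and the Lipschitz bound $\lVert \nabla f(x_k)-\nabla f(x^*)\rVert^2 \le \lVert \hat x_k\rVert^2_{L_f^2}$ control the second, producing the matrix $Q = 2M - k_1 I_n$ and the stepsize requirement $P - \tfrac{L_f^2}{k_1}\succ 0$ in \eqref{stepsize_stronglyconvex}; the constraint $0<k_1<2\underline{\lambda}(M)$ is exactly what guarantees $Q\succ0$. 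The remaining triggering terms—those pairing $\beta\mathcal{L}e_k$ with $\hat x_{k+1}$, and $\beta\mathcal{L}e_{k+1}$ with $\hat z_k$ and with $\beta\mathcal{L}\hat x_{k+1}$ in the $W^{-1}$ norm—are each dominated by Young's inequality with the free parameters $k_2,k_3,k_5$, peeling off a small multiple of the current state (to be absorbed into the contraction) and a remainder proportional to $\lVert e_k\rVert^2 + \lVert e_{k+1}\rVert^2$; the non-increasing property of $E_k$ (Assumption \ref{trigthresum}) bounds this remainder by a multiple of $nE_k^2$, and these contributions assemble into precisely $nCE_k^2$ with $C$ as stated.

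Finally, having written $V_k - V_{k+1} \ge (\text{a positive-definite quadratic in } \hat x_{k+1},\hat z_{k+1}) - nCE_k^2$, I would choose $\sigma>0$ small enough that the positive-definite quadratic dominates $\sigma V_{k+1}$; since $\sigma$ itself enters the error budget through $C$ (the factor $\sigma+k_5$), this is a consistency choice that is feasible precisely because $P-\tfrac{L_f^2}{k_1}\succ0$ and $Q\succ0$ make the quadratic strictly positive definite, which yields \eqref{linear convergence}. I expect the main obstacle to be the dual bookkeeping of the second paragraph: correctly exploiting the range/pseudoinverse structure of $W^{-1}$ to turn $\langle\hat z_k,\hat x_{k+1}\rangle$ into the telescoping dual energy while tracking the Laplacian cross terms, and then apportioning the parameters $k_1,\dots,k_5$ and $\sigma$ so that a strictly positive contraction survives and all error contributions coalesce into the exact form of $C$.
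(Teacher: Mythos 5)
Your outline reproduces much of the paper's machinery (the error dynamics with $e_k=\tilde x_k-x_k$, the $z'$/pseudoinverse handling of the $\big(\beta\mathcal{L}+\frac{\mathbf{1}\mathbf{1}^{\mathrm{T}}}{n}\big)^{-1}$-weighted dual energy, the bound $\lVert e_k\rVert^2\le nE_k^2$, and the final Young step with $k_5$ plus monotonicity of $E_k$), but it has a genuine gap at the decisive point: the source of the \emph{dual} contraction. The direct Lyapunov estimate you describe can only place, on the decrease side, the primal-residual term weighted by $Q$, the primal increment $\lVert x_{k+1}-x_k\rVert^2_{P-{L_f^2}/{k_1}}$, and the dual increment $\lVert z_{k+1}-z_k\rVert^2_{(\beta\mathcal{L}+\frac{\mathbf{1}\mathbf{1}^{\mathrm{T}}}{n})^{-1}}$ --- this is exactly the paper's \eqref{rough_decreasing} --- and \emph{no} term of the form $-c\lVert z_{k+1}-z^*\rVert^2$ ever appears. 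The resulting quadratic is therefore only positive \emph{semi}definite, degenerate in the $\hat z_{k+1}$ direction, so your closing claim that it is ``strictly positive definite'' because $Q\succ0$ and $P-\frac{L_f^2}{k_1}\succ0$ fails, and no choice of small $\sigma$ can make it dominate $\sigma\lVert z_{k+1}-z^*\rVert^2_{(\beta\mathcal{L}+\frac{\mathbf{1}\mathbf{1}^{\mathrm{T}}}{n})^{-1}}$. The missing idea is the paper's \eqref{immediate_result}: rewrite the combined optimality relation \eqref{optimality_condition_strong_convexity} as $z_{k+1}-z^*=-\big(\triangledown f(x_k)-\triangledown f(x^*)\big)-P(x_{k+1}-x_k)-\beta\mathcal{L}(e_k-e_{k+1})$ and apply Young's inequality with the parameters $k_2,k_3$ to lower-bound $\lVert P(x_{k+1}-x_k)\rVert^2$ by $\big(1-\frac{2}{k_3}\big)\lVert z_{k+1}-z^*\rVert^2$ minus multiples of $\lVert x_k-x^*\rVert^2_{L_f^2}$ and of the triggering error. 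It is the primal increment term that buys the dual contraction, which is why $k_2,k_3$ enter the constant $C$ and the calibration conditions \eqref{key1}--\eqref{key3}; in your plan $k_2,k_3$ are instead spent on the triggering-error cross terms, leaving the dual residual uncontrolled.

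A second, more local error: your split of the gradient term through $\hat x_k$, i.e.\ $2\langle \triangledown f(x_k)-\triangledown f(x^*),\hat x_{k+1}\rangle = 2\langle\cdot,\hat x_k\rangle+2\langle\cdot,x_{k+1}-x_k\rangle$ with the Lipschitz bound $\lVert \triangledown f(x_k)-\triangledown f(x^*)\rVert^2\le\lVert\hat x_k\rVert^2_{L_f^2}$, does \emph{not} produce $Q=2M-k_1I_n$ and \eqref{stepsize_stronglyconvex}: Young with parameter $k_1$ then yields the matrix $2M-\frac{L_f^2}{k_1}$ sitting at the \emph{stale} iterate $\hat x_k$ and an increment penalty $k_1 I_n$, i.e.\ the stepsize condition $P\succ k_1I_n$ instead. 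The paper splits through the new iterate, $\triangledown f(x_k)-\triangledown f(x^*)=\big(\triangledown f(x_k)-\triangledown f(x_{k+1})\big)+\big(\triangledown f(x_{k+1})-\triangledown f(x^*)\big)$ as in \eqref{k1}: strong convexity between $x_{k+1}$ and $x^*$ places $M$ on $\hat x_{k+1}$, while Lipschitz continuity between \emph{consecutive} iterates places $\frac{L_f^2}{k_1}$ on the increment, where it is absorbed by \eqref{stepsize_stronglyconvex}. Having $Q$ at the new iterate is also structurally necessary: it is what lets condition \eqref{key3} convert the surplus $(1-k_4)Q$ into the contraction factor $\sigma$ on the primal residual. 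With your placement at $\hat x_k$ you would need an additional argument relating $\lVert\hat x_k\rVert$ to $\lVert\hat x_{k+1}\rVert$, and the theorem's stated constants would not emerge.
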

\begin{proof}
	The proof is postponed to Appendix B.
\end{proof}

{ 
	\begin{remark}
		The analysis framework developed in this work may apply to some asynchronous cases, e.g., \cite{jakovetic,chenlag}. 
		In this work, the triggering scheduler imposes conditions on the outdated information deemed effective, that is, the error between it and the real-time information should be decreasing fast enough (summable). A rigorous analysis that heavily exploits this property is then carried out. In particular, the effect of triggering behavior on the primal-dual residual is proved bounded when the triggering threshold is summable over time. For some asynchronous communication patterns such as \cite{jakovetic,chenlag}, the algorithms can be expressed as inexact executions of standard ones. For example, the authors in \cite{jakovetic} considered a scenario where each working agent may be inactive with a decreasing probability over time, and treated the missed information as errors. Similarly, the authors in \cite{chenlag} used outdated information to save resources provided that the progress made during two sequential instants is bounded from above. 
		However, for some asynchronous algorithms, this framework doe not apply. For example, the design and analysis of the asynchronous algorithm in \cite{maowalkman} are motivated by incremental methods, and therefore are essentially different from this work.
\end{remark}}

Some discussions on the results in Theorem \ref{linear_convergence} are in order.

\emph {1) Linear convergence:}
	It is revealed in Theorem \ref{linear_convergence} that if the objective function is further assumed to be strongly convex and the stepsize satisfies a relatively stricter condition then a much faster convergence rate can be obtained. In particular, if $E_k$ linearly converges then we obtain a linear convergence rate for the primal-dual residual. And if the rate constant for a linearly convergent $E_k$ is smaller than $\sqrt{\frac{1}{1+\sigma}}$ then the convergence of the primal-dual residual is linear with constant $\frac{1}{1+\sigma}$ as in a periodic algorithm.

{ 
\emph {2) Impact of free parameters:}
	In Theorem \ref{linear_convergence}, several free parameters such as $k_1,k_2,k_3,k_4,k_5$ are used to describe convergence results. 
	How the specific values of them affect the result is explained in the following.
\begin{itemize}
	\item 	$k_1$ is used in \eqref{k1} and should be selected in set $(0, 2\underline{\lambda}(M))$. It directly affects the choices of $H$ and $\beta$, as suggested by the sufficient condition in \eqref{stepsize_stronglyconvex} for convergence.
	
	\item $k_2$ and $k_3$ are used in \eqref{intermediate_result} to separate triggering errors from the primal-dual residual, and should be chosen in $(0,\infty)$ and $(2,\infty)$, respectively. A smaller $k_2$ and a larger $k_3$ give a conservative constant $C$ in \eqref{linear convergence}, but allow us to get a larger $\sigma$ and therefore faster convergence.
	
	\item $k_4$ should be in $(0,1)$. Its role can be observed in \eqref{key} and \eqref{key_inequality}, where the relation between $\big\lVert P\big( x_{k+1}-x_{k} \big) \big\rVert^2$ and the primal-dual residual is established. A larger $k_4$ renders the weight on the primal residual in \eqref{linear convergence} heavier, but makes $\sigma$ smaller to get \eqref{key} satisfied.
	
	\item The proof shows that the key for linear convergence is the satisfaction of \eqref{key}
	with a sufficiently small $\sigma+k_5$.
	This implies that $\sigma+k_5$ can only take values in
	\begin{equation}\label{k_range}
	(0,\min\{R_1,R_2,R_3\}),
	\end{equation} 
	where
	\begin{equation*}
	\begin{split}
	R_1&\triangleq \frac{ \underline{\lambda}\big(k_4Q{(L_f^{-1})}^2\big) (1-\frac{2}{k_3})\underline{\lambda}(\beta\mathcal{L}+\frac{\mathbf{1}\mathbf{1}^{\mathrm{T}}}{n})}{k_2+k_3-1}\\
	R_2&\triangleq \underline{\lambda}\Big(P^{-1}-\frac{L_f(P^{-1})^2}{k_1}\Big) \underline{\lambda}(\beta\mathcal{L}+\frac{\mathbf{1}\mathbf{1}^{\mathrm{T}}}{n})(1-\frac{2}{k_3})\\
	R_3& \triangleq (1-k_4)Q(P+k_4Q)^{-1},
	\end{split}
	\end{equation*}
	to ensure \eqref{key}.
	Therefore setting a larger $k_5$ in \eqref{k_range} leads to a smaller $\sigma$ and slower convergence. In particular, given $k_5$ and a linearly decreasing threshold $\{E_0\rho^k\}$, the convergence rate becomes
	\begin{equation*}
	O\Big(\max\Big\{  \rho^2, \frac{1}{\min\{R_1,R_2,R_3\}-k_5} \Big\}^k\Big).
	\end{equation*}
\end{itemize}}


{ 
\emph {3) Choices of $\beta$, $H$ and $E_k$:}
 Selecting a larger $\eta_i$ and a smaller $\beta$ generally leads to heavier weights, i.e., $P+k_4Q$ and $\big(\beta\mathcal{L}+\frac{\mathbf{1}\mathbf{1}^{\mathrm{T}}}{n}\big)^{-1}$,  on the primal and dual residuals  in \eqref{linear convergence}. However, a larger spectral radius of $H-\beta\mathcal{L}$ also results in a smaller $\min(R_1,R_2,R_3)$ in \eqref{k_range} and therefore slower convergence. For the reasonable choices of these parameters, one can set $\beta=\frac{1}{\overline{\lambda}(\mathcal{L})+1}$ and $H= I_n+\frac{L_f^2}{k_1}$. 
 Since the slower one in $E_k$ and ${\frac{1}{(1+\sigma)^{0.5k}}}$ will dominate the convergence,
it is then always preferable to choose an exponentially decreasing  sequence for the triggering threshold in distributed strongly convex optimization. For the base constant, $E_{i,0}=0.1\lvert{z}_{i,0}+\triangledown f_i({{x}}_{i,0})+\beta \sum_{j\in\mathcal{N}_i}\big({x}_{i,0}-{x}_{j,0}\big)\lvert$ is an appropriate choice.
	
}

\section{Simulation Studies}

This section examines the effectiveness of the proposed algorithm by applying it to minimize two types of objective functions, that is, composite convex objectives and strongly convex and smooth objectives. The simulations were performed using Matlab R2017b on macOS Catalina with an Intel I9 processor of 2.3 GHz and 16 GB RAM.

{ 
\subsection{Case \uppercase\expandafter{\romannumeral1}: Composite convex objectives }
Consider the decentralized $l_1$-$l_2$ minimization problem: 
\begin{equation*}
\min_{\theta} \sum_{i=1}^{n}\bigg\{
\frac{1}{2}\lVert b_i - A_i \theta\rVert^2 + \tau_i \lVert \theta\rVert_1\bigg\},
\end{equation*}
where data $A_i\in\mathbb{R}^{p_i\times m}$, $b_i\in\mathbb{R}^{p_i}$ and regularization parameter $\tau_i>0$ are private to agent $i$. The two component functions for each agent are $f_i(\theta)=\frac{1}{2}\lVert b_i - A_i \theta\rVert^2$ that is convex with Lipschitz continuous gradient, and $g_i(\theta)=\tau_i\lVert \theta \rVert_1$ that is convex but nondifferentiable. In the simulation, the parameters are chosen as $p_i=3$, $m=50$, and $n=100$; the data $A_i$ and $b_i$ are randomly generated with normalization. 

In the simulation, a network of $n=100$ agents is randomly chosen with connectivity ratio $r=0.4$ \cite{wattscollective}, where $r$ is defined as the number of links divided by the number of all possible links $\frac{n(n-1)}{2}$. 
We compare the performance of the proposed methods with the ADMM-based algorithm \cite{shionthelinear} and  its event-triggered variant (COCA) \cite{liucommunication-censored}. For \cite{shionthelinear,liucommunication-censored}, the projected scaled subgradient method available as a Matlab function $L1General2\_PSSgb$ in \cite{schmidtfast} is used to solve the subproblems with an accuracy of $10^{-10}$ in terms of the $l_\infty$ norm of the subgradient. 
Communication strategies in which each agent triggers network transmission every two iterations or four iterations are also simulated. 
The parameters for these algorithms are manually tuned  in periodic setting to achieve the best performance: $H=0.6I_n, \beta=0.0025$ and $c=0.0025$ are considered for the proposed method and \cite{shionthelinear,liucommunication-censored}, respectively.
For event-triggered methods,  the triggering thresholds for agents are set as $E_{i,k}=\frac{20}{k^{1.2}}$.
The initial guesses of primal and dual variables are set as $0$ for all methods. The performances are evaluated in terms of the objective error $\lvert F(\hat{x}_t)-F(x^*) \rvert$ over the number of iteration steps and broadcasting times of the first agent.

The results are plotted in Figs. \ref{iteration0.4} and \ref{communication0.4}. We can see from them that both event-triggered LALM and COCA, while having comparable performances with their periodic counterparts, achieve significant communication reductions. 
In this example, the event-triggered LALM has slightly better performance than periodic LALM, implying that the effect of triggering
behavior on the performance is non-monotone.
The results also suggest that ADMM-based approaches outperform LALM-based ones in terms of convergence rate. 
This is mainly because that  ADMM-based methods used the original augmented Lagrangian while in the proposed method a linearized one is used to ease the computational burden of solving subproblems.
As a consequence, ADMM and COCA consume much more computational resources than the proposed methods at each iteration.
In this specific example, the time spent per iteration for COCA is $0.2431\rm{s}$ on average and the time for the proposed method is $0.0068\rm{s}$. In practice, a tradeoff between network utilization and computational resource consumption should be made. The periodic scheme of $2$ periods halves the number of communication rounds for each agent. However, when the number of periods increases to $4$, the iterates diverge. Compared to the periodic scheme of $2$ periods, the proposed algorithm consumes less communication cost and has convergence rate guarantees. 

Then, a sparser random network with $n=100$ and $r=0.04$ is considered. The parameters are tuned as $H=0.6I_n, \beta=0.01$ to achieve the best performance. 
The results in Figs. \ref{iteration0.04} and \ref{communication0.04} suggest that 
the denser configuration ($r=0.4$) leads to faster convergence, and each agent broadcasts more in sparser networks to achieve a given accuracy. This is primarily because that a denser network has a more balanced set of weights for agents and more information from neighbors can be used in each iteration/communication round. }

\begin{figure}[!htb]
	\centering%
	\includegraphics[width=3.5in]{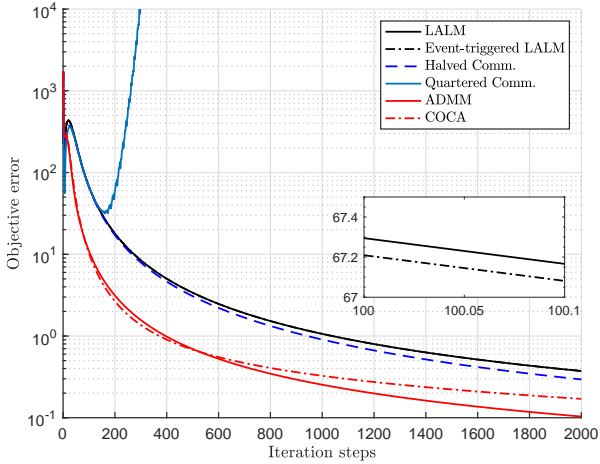}
	\caption{Objective error $\lvert F(\hat{x}_t)-F(x^*) \rvert$ versus iteration steps when $r=0.4$.}\label{iteration0.4}
\end{figure}

\begin{figure}[!htb]
	\centering%
	\includegraphics[width=3.5in]{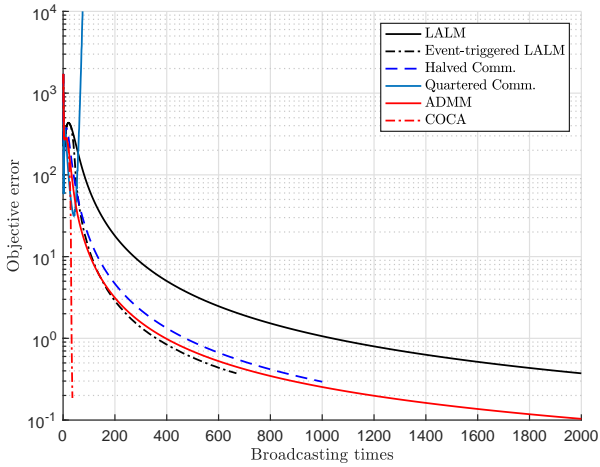}
	\caption{Objective error $\lvert F(\hat{x}_t)-F(x^*) \rvert$ versus broadcasting times when $r=0.4$.}\label{communication0.4}
\end{figure}

\begin{figure}[!htb]
	\centering%
	\includegraphics[width=3.5in]{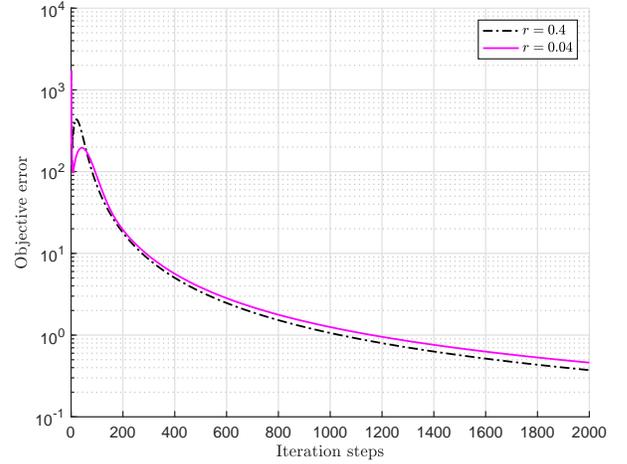}
	\caption{Objective error $\lvert F(\hat{x}_t)-F(x^*) \rvert$ versus iteration steps in different random networks.}\label{iteration0.04}
\end{figure}

\begin{figure}[!htb]
	\centering%
	\includegraphics[width=3.5in]{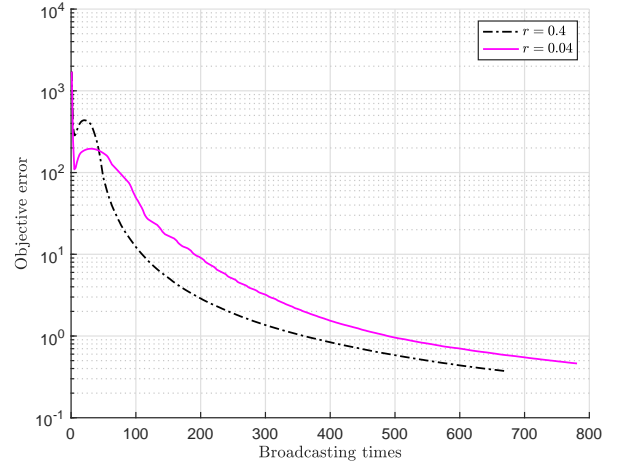}
	\caption{Objective error $\lvert F(\hat{x}_t)-F(x^*) \rvert$ versus broadcasting times in different random networks.}\label{communication0.04}
\end{figure}

	\begin{table}[!htb]
	\begin{center}
		\caption{The time spent per iteration of two algorithms}
		{\tabcolsep6pt\begin{tabular}{@{}ll@{}}\toprule
				Algorithm & Time spent per iteration\\
			\midrule
				COCA  &$0.2431\rm{s}$\\
			\midrule
			Event-triggered LALM  &  $0.0068\rm{s}$ \\ 
					\bottomrule
				\label{tab1}
		\end{tabular}}{}	
		\vspace{-2em}	
	\end{center}
\end{table}

\subsection{Case \uppercase\expandafter{\romannumeral2}: Strongly convex and smooth objectives }

Consider the following decentralized logistic regression problem:
\begin{equation*}
\min_{\theta} \sum_{i=1}^{n}\bigg\{ \sum_{j=1}^{m_i}\ln \Big(1+\exp \big(-y^i_j(M^{i\mathrm{T}}_j\theta)\big)\Big) \bigg\}.
\end{equation*}
where the input features $M_j^i\in\mathbb{R}^{m}$ and the class labels $y_j^i\in\{-1,1\}$ with $j=1,\cdots,m_i$ are privately held by each agent $i$. Note that we set the last element of the feature vector $M_j^i\in\mathbb{R}^{m}$ to $1$ as in standard logistic regression,
then the last element of the decision variable $\theta$ becomes the adjustable bias of the logistic regression model. 
The number of samples for each agent $i$ is $m_i=8$, and the dimension for decision variable is $m=10$.
In the simulation, all the $400$ samples are generated randomly. A network of $n=100$ with $r=0.04$ is considered.  

{ 
The linearized ADMM-based algorithm (DLM) in \cite{lingdlm} and the gradient tracking method in \cite{quharnessing}, and their event-triggered variants \cite{licola} (COLA) and  \cite{hayashievent-triggered} are simulated for comparison.
Their parameters are manually tuned in periodic setting to achieve the best performance: $H=55I_n, \beta=1$ for the proposed method, $c=1, \rho=50$ for \cite{lingdlm,licola}, and
$\eta=0.06$ for \cite{quharnessing,hayashievent-triggered}. The mixing matrix in \cite{quharnessing,hayashievent-triggered} is selected with the Metropolis rule \cite{xiaofast}.
The local initial guesses of primal and dual variables for each agent are set as $0$.
The triggering threshold for exchanging primal variables is set as $E_k={0.9^{0.1k}}$ for all event-triggered methods.
An extra triggering threshold used to track the gradient in \cite{hayashievent-triggered} is selected as ${0.3^{0.1k}}$.
We evaluate the performance by considering the residual $\frac{\lVert x_k-x^*\rVert_F}{\lVert x_0-x^*\rVert_F}$ over the number of local iteration iteration steps and communication times of the first agent. 


\begin{figure}[!htb]
	\centering%
	\includegraphics[width=3.5in]{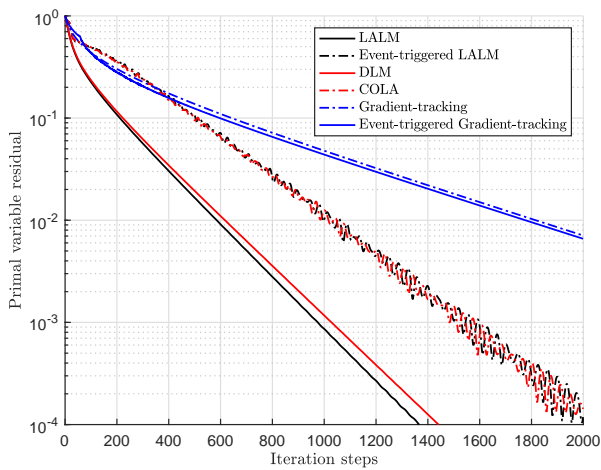}
	\caption{Primal variable residual versus iteration steps when $r=0.04$.}\label{iteration_strongly}
\end{figure}

\begin{figure}[!htb]
	\centering%
	\includegraphics[width=3.5in]{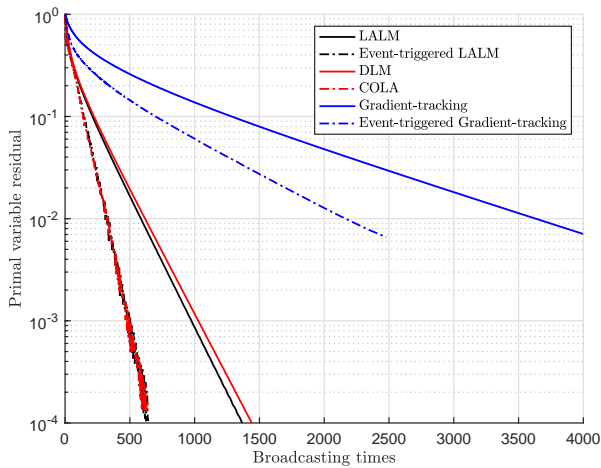}
	\caption{Primal variable residual versus broadcasting times when $r=0.04$.}\label{communication_strongly}
\end{figure}


The results are reported in Figs. \ref{iteration_strongly} and \ref{communication_strongly}. 
Both the methods exactly converge.
However, the gradient-tracking method converges at a much slower rate than other two types of methods. This is primarily because that this algorithm only allows one parameter to be tuned while other methods have two.
The results also show that generally event-triggered methods converge at slower rates and present more oscillatory trajectories than their periodic counterparts, mainly due to the errors caused by event-triggering communication.
However, significant reductions in network utilization are observed in event-triggered methods.
In particular, the proposed method and COLA
save $\sim\frac{1}{2}$ communication cost to achieve an accuracy of $10^{-4}$. The gradient-tracking method consumes much heavier communication cost since both the estimated gradient and decision variable have to be exchanged.}

\section{Conclusion}
In this work, we have designed a decentralized event-triggered algorithm for large-scale convex optimization problems with coupled cost functions. Convergence rates of the proposed algorithm have been established for different types of objectives with particular triggering thresholds. Numerical experiments have demonstrated the effectiveness of the proposed method in saving communication cost. 
{ 
In the literature of event-triggered control, triggering schedulers and system states are generally related. 
Relating schedulers to the decision variable at the last
triggering instance may help achieve a better communication-rate tradeoff. Establishing a communication complexity bound for reaching a given accuracy in terms of the triggering threshold is also important. These topics will be explored in the future.}


%

\appendices
\section{}
Before developing the proof for Theorem \ref{Convergence_smooth}, several useful technical lemmas are presented. 

\begin{lemma} \cite{xuaccelerated}\label{symmetric} 
	Given a positive semidefinite matrix $W\in\mathbb{R}^{n\times n}$, it holds
		\begin{equation}\label{useful_equality}
	2\Big\langle Wu,v \Big\rangle =\big\lVert u \big\rVert^2_W+\big\lVert v \big\rVert^2_W-\big\lVert u-v \big\rVert^2_W, \forall u,v\in\mathbb{R}^n.
	\end{equation}
\end{lemma}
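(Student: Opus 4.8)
The statement to prove is the polarization-type identity
\begin{equation*}
2\big\langle Wu,v \big\rangle =\big\lVert u \big\rVert^2_W+\big\lVert v \big\rVert^2_W-\big\lVert u-v \big\rVert^2_W, \quad \forall u,v\in\mathbb{R}^n,
\end{equation*}
for a positive semidefinite $W$. The plan is a direct algebraic expansion of the right-hand side using the definition $\lVert x\rVert_W^2=x^{\mathrm{T}}Wx$. The only structural fact I need is that a positive semidefinite matrix is, by the convention adopted in this paper, symmetric; this both guarantees that each weighted-norm term is a well-defined nonnegative quantity (so the notation $\lVert\cdot\rVert_W$ is meaningful) and supplies the identity $W=W^{\mathrm{T}}$ that drives the computation.

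First I would expand the subtracted term by bilinearity:
\begin{equation*}
\big\lVert u-v \big\rVert^2_W=(u-v)^{\mathrm{T}}W(u-v)=u^{\mathrm{T}}Wu-u^{\mathrm{T}}Wv-v^{\mathrm{T}}Wu+v^{\mathrm{T}}Wv.
\end{equation*}
Then I would invoke symmetry of $W$ to merge the two cross terms, using $v^{\mathrm{T}}Wu=(v^{\mathrm{T}}Wu)^{\mathrm{T}}=u^{\mathrm{T}}W^{\mathrm{T}}v=u^{\mathrm{T}}Wv$, so that $\lVert u-v\rVert_W^2=\lVert u\rVert_W^2-2u^{\mathrm{T}}Wv+\lVert v\rVert_W^2$. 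Substituting this into the right-hand side and cancelling the $\lVert u\rVert_W^2$ and $\lVert v\rVert_W^2$ contributions leaves exactly $2u^{\mathrm{T}}Wv$.

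Finally I would match this to the left-hand side by observing that, again using $W=W^{\mathrm{T}}$, the inner product satisfies $\langle Wu,v\rangle=(Wu)^{\mathrm{T}}v=u^{\mathrm{T}}W^{\mathrm{T}}v=u^{\mathrm{T}}Wv$, whence $2\langle Wu,v\rangle=2u^{\mathrm{T}}Wv$ coincides with the simplified right-hand side, completing the argument. There is no genuine obstacle here: the result is simply the parallelogram/polarization law for the semi-inner-product induced by $W$, and the entire proof is one line of bookkeeping once symmetry is used to identify the cross terms. The positive semidefiniteness hypothesis plays no role beyond legitimizing the weighted-norm notation; the identity in fact holds for any symmetric $W$.
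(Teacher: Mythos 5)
Your proof is correct. The paper offers no internal proof of this lemma---it is imported directly from \cite{xuaccelerated}---and your argument is exactly the standard one: expand $\lVert u-v\rVert_W^2=(u-v)^{\mathrm{T}}W(u-v)$ by bilinearity, merge the cross terms via $W=W^{\mathrm{T}}$ (which the paper's notational convention indeed builds into $W\succeq 0$), and cancel; your closing observation that positive semidefiniteness serves only to legitimize the weighted-norm notation, the identity holding for any symmetric $W$, is also accurate.
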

\begin{lemma}\cite{xuabregman}\label{uniqueness_span1}
	If Assumption \ref{graphconnected} holds, then for each $y\in  span^{\perp}{\mathbf{1}}$, there exists a unique $y'\in span^{\perp}{\mathbf{1}}$ such that $y=\mathcal{L}y'$ and vice versa.
\end{lemma}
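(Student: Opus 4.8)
The plan is to recognize the statement as the assertion that the graph Laplacian $\mathcal{L}$, restricted to the subspace $V := span^{\perp}\{\mathbf{1}\} = \{y\in\mathbb{R}^n : \mathbf{1}^{\mathrm{T}}y = 0\}$, is a linear bijection of $V$ onto itself. Since $V$ is finite-dimensional (of dimension $n-1$), by the rank--nullity theorem it suffices to establish two things: that $\mathcal{L}$ maps $V$ into $V$, and that $\mathcal{L}$ is injective on $V$. Surjectivity onto $V$, and hence the existence and uniqueness of $y'$ for each $y$, then follows automatically.

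First I would verify the invariance $\mathcal{L}V \subseteq V$. Because $\mathcal{L}$ is symmetric and satisfies $\mathcal{L}\mathbf{1} = 0$ (the row sums of a Laplacian vanish), for any $y'\in V$ one has $\mathbf{1}^{\mathrm{T}}\mathcal{L}y' = (\mathcal{L}\mathbf{1})^{\mathrm{T}}y' = 0$, so $\mathcal{L}y'\in V$. This gives a well-defined linear map $\mathcal{L}|_V : V\to V$, which already settles the ``vice versa'' direction, since each $y'\in V$ determines the unique image $y=\mathcal{L}y'\in V$.

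The crux --- and the only place where Assumption \ref{graphconnected} enters --- is injectivity, which reduces to computing the kernel of $\mathcal{L}$. The key identity is the quadratic form $y^{\mathrm{T}}\mathcal{L}y = \frac{1}{2}\sum_{(i,j)\in\mathcal{E}}(y_i-y_j)^2$, obtained directly from $\mathcal{L}=\mathcal{D}-\mathcal{A}$. If $\mathcal{L}y'=0$ then $y'^{\mathrm{T}}\mathcal{L}y'=0$, forcing $y_i'=y_j'$ for every edge $(i,j)\in\mathcal{E}$; connectedness of $\mathcal{G}$ then propagates this equality across the whole vertex set, so $y'$ is a constant vector, i.e. $y'\in span\{\mathbf{1}\}$. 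Hence $\ker\mathcal{L}=span\{\mathbf{1}\}$. For $y'\in V=span^{\perp}\{\mathbf{1}\}$ this means $y'\in span\{\mathbf{1}\}\cap span^{\perp}\{\mathbf{1}\}=\{0\}$, so $\mathcal{L}|_V$ is injective.

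With invariance and injectivity in hand, rank--nullity on the $(n-1)$-dimensional space $V$ yields that $\mathcal{L}|_V$ is surjective onto $V$, so for each $y\in V$ there is a unique preimage $y'\in V$ with $y=\mathcal{L}y'$. The main obstacle, though elementary, is the kernel computation: establishing $\ker\mathcal{L}=span\{\mathbf{1}\}$ is precisely where connectivity is indispensable, and I would state clearly that without it the kernel would have dimension equal to the number of connected components, in which case the lemma would fail.
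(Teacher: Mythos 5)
Your proof is correct, and it is the standard argument. Note that the paper itself does not prove this lemma---it is imported directly from \cite{xuabregman}---and your three steps (invariance of $span^{\perp}{\mathbf{1}}$ under the symmetric $\mathcal{L}$ via $\mathcal{L}\mathbf{1}=0$, the kernel computation $\ker\mathcal{L}=span\{\mathbf{1}\}$ from the Laplacian quadratic form together with connectedness, and then rank--nullity on the $(n-1)$-dimensional invariant subspace to upgrade injectivity to bijectivity) constitute precisely the argument on which the cited result rests, with connectivity correctly identified as the indispensable hypothesis.
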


\begin{lemma}\label{one_iteration}
	If all the conditions in Theorem \ref{Convergence_smooth} hold, then, for any $x\in null(\mathcal{L})$ and $z\in span^{\perp}{\mathbf{1}}$,
	\begin{equation}
	\begin{split}
	&F({x}_{k+1})-F(x)+{  \Big\langle z,  {x}_{k+1} \Big\rangle}\\
	\leq& -\frac{1}{2}\big\lVert x_{k+1}-x_k \big\rVert_{P-{L_f}}^2 -\frac{1}{2}\big( \big\lVert x_{k+1}-x \big\rVert_{P}^2-\big\lVert x_{k}-x \big\rVert_{P}^2 \big) \\
	&-\Big\langle x_{k+1} -x, \beta \mathcal{L}\big(e_k-e_{k+1}\big)\Big\rangle+\Big\langle e_{k+1} , z_{k+1}-z\Big\rangle \\
	& + \frac{1}{2}\big(\big\lVert z-z_{k}\big\rVert_{(\beta \mathcal{L}+\frac{\mathbf{1}\mathbf{1}^{\mathrm{T}}}{n})^{-1}}^2 - \big\lVert z-z_{k+1}\big\rVert_{(\beta \mathcal{L}+\frac{\mathbf{1}\mathbf{1}^{\mathrm{T}}}{n})^{-1}}^2 \big)\\
	&-\frac{1}{2}\big\lVert z_{k+1}-z_{k}\big\rVert^2_{(\beta \mathcal{L}+\frac{\mathbf{1}\mathbf{1}^{\mathrm{T}}}{n})^{-1}}
	\end{split}
	\end{equation}
	where $P$ is defined in Theorem \ref{Convergence_smooth}  and $e_k=\tilde{x}_k-x_k$.
\end{lemma}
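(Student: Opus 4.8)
The plan is to derive this single-iteration estimate from the first-order optimality condition of the primal subproblem \eqref{event-triggered variant}, combined with convexity and smoothness of the objective and two applications of the polarization identity in Lemma \ref{symmetric}. Writing the update in stacked form and letting $s_{k+1}\in\partial g(x_{k+1})$ denote the subgradient selected at optimality, I would first record
\begin{equation*}
s_{k+1}+H(x_{k+1}-x_k)+\triangledown f(x_k)+z_k+\beta\mathcal{L}\tilde{x}_k=0,
\end{equation*}
which isolates $s_{k+1}$. Using convexity of $g$ to get $g(x_{k+1})-g(x)\le\langle s_{k+1},x_{k+1}-x\rangle$ and the descent lemma for the Lipschitz-smooth $f$ (Assumption \ref{Lipschitz}) to get $f(x_{k+1})-f(x)\le\langle\triangledown f(x_k),x_{k+1}-x\rangle+\tfrac12\|x_{k+1}-x_k\|_{L_f}^2$, the $\triangledown f(x_k)$ terms cancel after substituting the optimality condition, leaving
\begin{equation*}
F(x_{k+1})-F(x)+\langle z,x_{k+1}\rangle\le\tfrac12\|x_{k+1}-x_k\|_{L_f}^2-\langle H(x_{k+1}-x_k)+z_k+\beta\mathcal{L}\tilde{x}_k-z,\,x_{k+1}-x\rangle,
\end{equation*}
where I used $\langle z,x_{k+1}\rangle=\langle z,x_{k+1}-x\rangle$, valid because $x\in\mathrm{null}(\mathcal{L})=\mathrm{span}\{\mathbf{1}\}$ and $z\in\mathrm{span}^\perp\mathbf{1}$.

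Next I would process the inner product term by term. Applying Lemma \ref{symmetric} with $W=H$ to $-\langle H(x_{k+1}-x_k),x_{k+1}-x\rangle$ produces $-\tfrac12\|x_{k+1}-x_k\|_H^2-\tfrac12\big(\|x_{k+1}-x\|_H^2-\|x_k-x\|_H^2\big)$; splitting $H=P+\beta\mathcal{L}$ and absorbing the $\tfrac12\|x_{k+1}-x_k\|_{L_f}^2$ from the smoothness step converts the first squared term into $-\tfrac12\|x_{k+1}-x_k\|_{P-L_f}^2$ and yields the $P$-weighted telescoping pair, while leaving residual $\beta\mathcal{L}$-weighted terms that, by the reverse polarization identity, recombine into $-\langle\beta\mathcal{L}(x_{k+1}-x_k),x_{k+1}-x\rangle$. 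I would then substitute $\tilde{x}_k=x_k+e_k$ and the dual update in the form $\beta\mathcal{L}x_{k+1}=z_{k+1}-z_k-\beta\mathcal{L}e_{k+1}$ (from $z_{k+1}=z_k+\beta\mathcal{L}\tilde{x}_{k+1}$). Using $\mathcal{L}x=0$ repeatedly, the $\beta\mathcal{L}x_k$ and $\beta\mathcal{L}x_{k+1}$ contributions collapse and the error pieces assemble into exactly $-\langle x_{k+1}-x,\beta\mathcal{L}(e_k-e_{k+1})\rangle$, reducing the remaining dual cross term to $-\langle z_{k+1}-z,\,x_{k+1}-x\rangle$.

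The final and most delicate step is to convert $-\langle z_{k+1}-z,x_{k+1}-x\rangle$ into the dual weighted-norm differences. Writing $x_{k+1}=\tilde{x}_{k+1}-e_{k+1}$ peels off the clean term $\langle e_{k+1},z_{k+1}-z\rangle$; and since $z_{k+1}-z\in\mathrm{span}^\perp\mathbf{1}$ while $x\in\mathrm{span}\{\mathbf{1}\}$, the remainder is $-\langle z_{k+1}-z,\tilde{x}_{k+1}\rangle$. Here I would invoke Lemma \ref{uniqueness_span1} together with the fact that $\beta\mathcal{L}+\frac{\mathbf{1}\mathbf{1}^\mathrm{T}}{n}$ acts as $\beta\mathcal{L}$ on $\mathrm{span}^\perp\mathbf{1}$: only the $\mathrm{span}^\perp\mathbf{1}$ component of $\tilde{x}_{k+1}$ pairs with $z_{k+1}-z$, and that component equals $\big(\beta\mathcal{L}+\frac{\mathbf{1}\mathbf{1}^\mathrm{T}}{n}\big)^{-1}(z_{k+1}-z_k)$ since $\beta\mathcal{L}\tilde{x}_{k+1}=z_{k+1}-z_k$. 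A second application of Lemma \ref{symmetric}, now with $W=\big(\beta\mathcal{L}+\frac{\mathbf{1}\mathbf{1}^\mathrm{T}}{n}\big)^{-1}$ and arguments $z_{k+1}-z_k$ and $z_{k+1}-z$, produces precisely the telescoping dual differences together with $-\tfrac12\|z_{k+1}-z_k\|_{(\beta\mathcal{L}+\frac{\mathbf{1}\mathbf{1}^\mathrm{T}}{n})^{-1}}^2$, and collecting all terms gives the claimed inequality. I expect the principal obstacle to be the bookkeeping in the second and third steps: the cancellations among the $\beta\mathcal{L}$-weighted primal terms, the errors $e_k,e_{k+1}$, and the dual variables must align exactly, and the passage from the primal pairing $\langle z_{k+1}-z,\tilde{x}_{k+1}\rangle$ to a dual weighted norm hinges on correctly handling the null-space/orthogonal-complement decomposition via Lemma \ref{uniqueness_span1}.
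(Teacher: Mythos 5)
Your proposal is correct and follows essentially the same route as the paper's proof: the same optimality-condition manipulation yielding $P(x_{k+1}-x_k)+\beta\mathcal{L}(e_k-e_{k+1})$, the same two applications of the polarization identity (Lemma \ref{symmetric}), and the same $\mathrm{null}(\mathcal{L})$/$\mathrm{span}^{\perp}\mathbf{1}$ orthogonality arguments. The only cosmetic difference is that you identify the $\mathrm{span}^{\perp}\mathbf{1}$ component of $\tilde{x}_{k+1}$ directly as $\big(\beta\mathcal{L}+\frac{\mathbf{1}\mathbf{1}^{\mathrm{T}}}{n}\big)^{-1}(z_{k+1}-z_k)$ and polarize once in the $\big(\beta\mathcal{L}+\frac{\mathbf{1}\mathbf{1}^{\mathrm{T}}}{n}\big)^{-1}$-weighted norm, whereas the paper first passes through the auxiliary variables $z'_k$ of Lemma \ref{uniqueness_span1} in the $\beta\mathcal{L}$-weighted norm and converts at the end --- the two are algebraically equivalent.
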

\begin{proof}[Proof of Lemma \ref{one_iteration}]
	By the smoothness of $f$, we have
	\begin{equation*}
	\begin{split}
	&f({x}_{k+1})\\
	\leq& f({x}_k)+ \Big\langle \triangledown f({x}_k),{x}_{k+1}-{x}_k  \Big\rangle +\frac{1}{2} \big\lVert {x}_{k+1}-{x}_k  \big\rVert _{L_f}^2.
	\end{split}
	\end{equation*}
	It follows from the convexity of $f$ 
	\begin{equation*}
	f(x_k)+\Big\langle \triangledown f(x_k),x-x_k \Big\rangle\leq f(x)
	\end{equation*}
	and $g$
	\begin{equation*}
 g(x_{k+1})-g(x) \leq 	\Big\langle x_{k+1}-x,\tilde{\triangledown}g(x_{k+1})	\Big\rangle 
	\end{equation*} 
	that
	\begin{equation}\label{saddle_point}
	\begin{split}
&F({x}_{k+1})-F(x)\\
	\leq & \Big\langle \triangledown f(x_k)+\tilde{\triangledown}g(x_{k+1}), x_{k+1}-x  \Big\rangle  +\frac{1}{2} \big\lVert {x}_{k+1}-{x}_k  \big\rVert _{L_f}^2.
	\end{split}
	\end{equation}
	From the iteration rule, we have
	\begin{equation*}
	\begin{split}
	0 =& \triangledown f({x}_k)+z_k+\tilde{\triangledown} g(x_{k+1}) -Hx_k +\beta \mathcal{L}\tilde{x}_k+H x_{k+1} \\
	0 =& z_{k+1}-z_k-{\beta}\mathcal{L}\tilde{x}_{k+1}
	\end{split}
	\end{equation*}
	where $\tilde{\triangledown} g(x_{k+1})$ is a subgradient of $g$ evaluated at $x_{k+1}$. 
	This implies 
	\begin{equation}\label{optimality_condition}
	\begin{split}
	0 =& \triangledown f({x}_k)+\tilde{\triangledown} g(x_{k+1})+z_{k+1}  +P\big( x_{k+1}-x_k \big)\\
	&+\beta \mathcal{L}\big( e_k-e_{k+1}\big).
	\end{split}
	\end{equation}
	Calculating the inner products of $x_{k+1}-x$ with both sides 
	give rise to
	\begin{equation}\label{optimality_inner_product}
	\begin{split}
	& \Big\langle x_{k+1} -x,\triangledown f({x}_k)+\tilde{\triangledown}g(x_{k+1})\Big\rangle \\
	=&- \Big\langle x_{k+1}-x , z_{k+1}-z\Big\rangle -\Big\langle x_{k+1} -x,P\big(x_{k+1}-x_k\big)\Big\rangle \\
	&-\Big\langle x_{k+1} -x, \beta \mathcal{L}\big(e_k-e_{k+1}\big)\Big\rangle -\Big\langle x_{k+1} , z\Big\rangle
	\end{split}
	\end{equation}
	for any $x\in null(\mathcal{L})$ and $z\in span^{\perp}{\mathbf{1}}$.
	From Lemma \ref{uniqueness_span1} and { the fact that
	\begin{equation*}
	z_{t+1}=\beta\mathcal{L}\sum_{k=0}^{t+1}\tilde{x}_{k},
	\end{equation*} }we obtain
	\begin{equation}
	\begin{split}\label{further_transformation}
	&\Big\langle x_{k+1}-x , z_{k+1}-z\Big\rangle \\
	&=\Big\langle \tilde{x}_{k+1} , z_{k+1}-z\Big\rangle-\Big\langle e_{k+1} , z_{k+1}-z\Big\rangle\\
	& = \Big\langle \beta\mathcal{L}\tilde{x}_{k+1} , z_{k+1}'-z'\Big\rangle-\Big\langle e_{k+1} , z_{k+1}-z\Big\rangle \\
	& =  \Big\langle \beta \mathcal{L}\big(z'_{k+1}-z'_k\big) , z_{k+1}'-z'\Big\rangle-\Big\langle e_{k+1} . z_{k+1}-z\Big\rangle.
	\end{split}
	\end{equation}
It follows
	\begin{equation}
	\begin{split}\label{intermediate_result}
	&F({x}_{k+1})-F(x)+ \Big\langle z,  {x}_{k+1}\Big \rangle \\
\overset{\romannumeral1}{	\leq}& \frac{ 1}{2}\big\lVert x_{k+1}-x_k \big\rVert_{L_f}^2-\Big\langle x_{k+1} -x,P\big(x_{k+1}-x_k\big)\Big\rangle \\
	&-\Big\langle x_{k+1} -x, \beta \mathcal{L}\big(e_k-e_{k+1}\big)\Big\rangle+\Big\langle e_{k+1} , z_{k+1}-z\Big\rangle\\
	&- \Big\langle \beta\mathcal{L}\big(z'_{k+1}-z'_k\big) , z_{k+1}'-z'\Big\rangle \\
\overset{\romannumeral2}{	=}&  -\frac{ 1}{2}\big\lVert x_{k+1}-x_k \big\rVert^2_{P-{L_f}} -\frac{1}{2}\big( \big\lVert x_{k+1}-x \big\rVert_{P}^2-\big\lVert x_k-x \big\rVert_{P}^2 \big) \\
	&-\Big\langle x_{k+1} -x, \beta \mathcal{L}\big(e_k-e_{k+1}\big)\Big\rangle+\Big\langle e_{k+1} , z_{k+1}-z\Big\rangle\\
	& + \frac{1}{2}\big( \big\lVert z'-z'_{k}\big\rVert_{{\beta \mathcal{L}}}^2 - \big\lVert z'_{k+1}-z'\big\rVert_{{\beta \mathcal{L}}}^2 \big)-\frac{1}{2}\big\lVert z'_{k+1}-z'_{k}\big\rVert^2_{\beta \mathcal{L}},
	\end{split}
	\end{equation}
	where we plug \eqref{optimality_inner_product} and \eqref{further_transformation} into \eqref{saddle_point} to get ``${\romannumeral1}$" and use Lemma \ref{symmetric} and 
	\begin{equation*}
	P\succ {L_f}\succeq 0,\, \beta \mathcal{L}\succeq 0 
	\end{equation*}
to get ``${\romannumeral2}$".
	{ Due to $z,z',z_k'\in span^{\perp}{\mathbf{1}}$, we have
	\begin{equation*}
	z-z_k=\beta \mathcal{L}\big(z'-z_k'\big)=\big(\beta \mathcal{L}+\frac{\mathbf{1}\mathbf{1}^{\mathrm{T}}}{n}\big)\big(z'-z_k'\big).
	\end{equation*}
	and therefore $z'-z_k'=\big(\beta \mathcal{L}+\frac{\mathbf{1}\mathbf{1}^{\mathrm{T}}}{n}\big)^{-1}\big(z-z_k\big)$. Then we consider
\begin{equation*}
	\big\lVert z'-z'_k\big\rVert_{{\beta \mathcal{L}}}^2 = \Big\langle z-z_k, z'-z_k'\Big\rangle= \big\lVert z-z_k\big\rVert_{\big(\beta \mathcal{L}+\frac{\mathbf{1}\mathbf{1}^{\mathrm{T}}}{n}\big)^{-1}}^2,
\end{equation*}}
	which together with \eqref{intermediate_result} gives the desired inequality.
\end{proof}

\begin{lemma}\label{primal-dual_bound}
	If all the conditions in Theorem \ref{Convergence_smooth} hold, then, for $k\leq t$, 
	\begin{equation*}
	\begin{split}
	&\big\lVert x_{k}-x^* \big\rVert+\big\lVert z^*-z_{k}\big\rVert \\
	&\leq  
	2A_t+  
	\sqrt{\frac{{2}}{b}}\big(\big\lVert x_{0}-x^* \big\rVert_{P}+\big\lVert z_0-z^* \big\rVert_{(\beta \mathcal{L}+\frac{\mathbf{1}\mathbf{1}^{\mathrm{T}}}{n})^{-1}}\big)
	\end{split}
	\end{equation*}
	where $P$, $b$ and $A_t$ are defined in Theorem \ref{Convergence_smooth}.
\end{lemma}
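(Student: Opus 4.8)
The plan is to specialize Lemma~\ref{one_iteration} to $x=x^*$ and $z=z^*$ and then telescope. First I would confirm this choice is admissible. Because $\mathcal{G}$ is connected, $\mathrm{null}(\mathcal{L})=\mathrm{span}\{\mathbf{1}\}$ and $\mathrm{span}^{\perp}\mathbf{1}=\mathrm{range}(\sqrt{\mathcal{L}})$, so the KKT conditions \eqref{KKT} give $x^*\in\mathrm{null}(\mathcal{L})$ and $z^*=\sqrt{\mathcal{L}}y^*\in\mathrm{span}^{\perp}\mathbf{1}$. Next, rewriting \eqref{KKT1} as $-z^*\in\partial F(x^*)$ and combining the subgradient inequality with $\langle z^*,x^*\rangle=\langle y^*,\sqrt{\mathcal{L}}x^*\rangle=0$, I obtain $F(x_{k+1})-F(x^*)+\langle z^*,x_{k+1}\rangle\geq0$; that is, the left-hand side of Lemma~\ref{one_iteration} is nonnegative for this choice.

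Writing $V_k=\tfrac12\lVert x_k-x^*\rVert_P^2+\tfrac12\lVert z^*-z_k\rVert_{(\beta\mathcal{L}+\frac{\mathbf{1}\mathbf{1}^{\mathrm{T}}}{n})^{-1}}^2$, the two matched-norm differences on the right of Lemma~\ref{one_iteration} assemble into $-(V_{k+1}-V_k)$. Moving them across, invoking the nonnegativity just established, and discarding the two nonpositive quadratic terms $-\tfrac12\lVert x_{k+1}-x_k\rVert_{P-L_f}^2$ and $-\tfrac12\lVert z_{k+1}-z_k\rVert_{(\beta\mathcal{L}+\frac{\mathbf{1}\mathbf{1}^{\mathrm{T}}}{n})^{-1}}^2$ (legitimate since $P-L_f\succ0$) yields the descent estimate
\begin{equation*}
V_{k+1}-V_k\leq -\big\langle x_{k+1}-x^*,\beta\mathcal{L}(e_k-e_{k+1})\big\rangle+\big\langle e_{k+1},z_{k+1}-z^*\big\rangle.
\end{equation*}

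The delicate step is bounding the two error terms. Using $\lVert\mathcal{L}\rVert=\overline{\lambda}(\mathcal{L})$, Cauchy--Schwarz, the componentwise bound $\lVert e_k\rVert\leq\sqrt{n}E_k$, and $E_{k+1}\leq E_k$, the first term is at most $2\beta\overline{\lambda}(\mathcal{L})\sqrt{n}E_k\lVert x_{k+1}-x^*\rVert$ and the second at most $\sqrt{n}E_k\lVert z^*-z_{k+1}\rVert$. The key is to keep each attached to its \emph{own} residual rather than bounding both by the full sum: with $a=\max\{2\beta\overline{\lambda}(\mathcal{L}),1\}$ both coefficients are $\leq a\sqrt{n}E_k$, so the right-hand side is at most $a\sqrt{n}E_k\,(\lVert x_{k+1}-x^*\rVert+\lVert z^*-z_{k+1}\rVert)$; being lazy here costs a spurious factor of two and spoils the stated constant. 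Summing from $0$ to $t-1$ telescopes the left side to $V_t-V_0$. To translate $V_k$ back into the quantity of interest I would use $b=\min\{\underline{\lambda}(L_f),\tfrac{1}{\overline{\lambda}(\beta\mathcal{L}+\frac{\mathbf{1}\mathbf{1}^{\mathrm{T}}}{n})}\}$: since $P\succeq L_f$ and $(\beta\mathcal{L}+\frac{\mathbf{1}\mathbf{1}^{\mathrm{T}}}{n})^{-1}\succeq\tfrac{1}{\overline{\lambda}(\beta\mathcal{L}+\frac{\mathbf{1}\mathbf{1}^{\mathrm{T}}}{n})}I$, one has $V_k\geq\tfrac{b}{4}(\lVert x_k-x^*\rVert+\lVert z^*-z_k\rVert)^2$, hence $\lVert x_k-x^*\rVert+\lVert z^*-z_k\rVert\leq 2\sqrt{V_k/b}$. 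Substituting this with $W_k:=\sqrt{V_k}$ produces the recursion $W_t^2\leq W_0^2+\frac{2a\sqrt{n}}{\sqrt{b}}\sum_{j=1}^{t}E_{j-1}W_j$.

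The self-reference in this recursion --- the bound on $W_j$ appears inside its own estimate --- is the main obstacle, and I would settle it with the standard recursion lemma for inexact first-order methods (Schmidt--Le Roux--Bach type): if $W_t^2\leq S^2+\sum_{j=1}^t\lambda_j W_j$ with $S,\lambda_j\geq0$, then $W_t\leq S+\sum_{j=1}^t\lambda_j$. Applying it with $S=W_0=\sqrt{V_0}$ and $\lambda_j=\frac{2a\sqrt{n}}{\sqrt{b}}E_{j-1}$ and multiplying by $2/\sqrt{b}$ gives $\lVert x_t-x^*\rVert+\lVert z^*-z_t\rVert\leq\frac{4a\sqrt{n}}{b}\sum_{j=1}^tE_{j-1}+\frac{2}{\sqrt{b}}\sqrt{V_0}$. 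Finally I would identify $\frac{4a\sqrt{n}}{b}\sum_{j=1}^tE_{j-1}=2A_t$ and bound $\frac{2}{\sqrt{b}}\sqrt{V_0}\leq\sqrt{\tfrac{2}{b}}\big(\lVert x_0-x^*\rVert_P+\lVert z_0-z^*\rVert_{(\beta\mathcal{L}+\frac{\mathbf{1}\mathbf{1}^{\mathrm{T}}}{n})^{-1}}\big)$ through $\sqrt{p^2+q^2}\leq p+q$, which is exactly the claim for index $t$. Running the same argument up to any $k\leq t$ gives the bound with $A_k$, and since $A_k\leq A_t$ the stated inequality holds for all $k\leq t$.
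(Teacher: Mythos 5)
Your proposal is correct and follows essentially the same route as the paper's proof: specialize Lemma~\ref{one_iteration} at $(x^*,z^*)$, use the KKT-based nonnegativity \eqref{primal_dual_error}, telescope, bound the triggering terms by $a\sqrt{n}E_{k-1}$ times the residual via monotonicity of $E_k$, lower-bound the weighted norms by $\frac{b}{4}(\lVert x_k-x^*\rVert+\lVert z^*-z_k\rVert)^2$, and close the self-referential recursion with Lemma~1 of \cite{schmidtconvergence}. The only (immaterial) difference is that you apply the recursion lemma to $W_k=\sqrt{V_k}$ and weaken $\sqrt{S^2+\Lambda^2/4}\leq S+\Lambda/2$ up front, whereas the paper applies it directly to $\lVert x_t-x^*\rVert+\lVert z^*-z_t\rVert$ and weakens at the end; both yield identical constants.
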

\begin{proof}[Proof of Lemma \ref{primal-dual_bound}]
{ 	First, we use the convexity of $F$ and the KKT conditions \eqref{KKT} to obtain
	\begin{equation}\label{primal_dual_error}
	\begin{split}
	&F(x)-F(x^*)+\Big\langle y^*,\big(\sqrt{\mathcal{L}}\otimes I_m\big)x\Big\rangle\\
	\geq &\Big \langle \tilde{\triangledown} F(x^*), x-x^*\Big\rangle+\Big\langle y^*,\big(\sqrt{\mathcal{L}}\otimes I_m\big)\big(x-x^*\big)\Big\rangle \\
	=& \Big \langle \tilde{\triangledown} F(x^*)+\big( \sqrt{\mathcal{L}}\otimes I_m\big){y}^*, x-x^*\Big\rangle = 0, \forall x
	\end{split}
	\end{equation}
	where $\tilde{\triangledown} F(x^*)\in \partial F(x^*)$.}
Then, we let $x=x^*$ and $z=z^*=\mathcal{L}y^*$ by definition in \eqref{one_iteration} and sum it over $k$ from $0$ to $t-1$ to get
	\begin{equation}\label{sum_primal-dual}
	\begin{split}
	& 0\leq \sum_{k=0}^{t-1}\Big(F(x_{k+1})-F(x^*)+\Big\langle y^*,\sqrt{\mathcal{L}}x_{k+1}\Big\rangle\Big)\\
	\leq&
	-\frac{ 1}{2}\Big(\sum_{k=0}^{t-1}\big\lVert x_{k+1}-x_k \big\rVert^2_{P-{L_f}} +\big\lVert x_{t}-x^* \big\rVert_{P}^2 -\big\lVert x_{0}-x^* \big\rVert_{P}^2\Big)  \\
	&-\frac{1}{2}\big\lVert z^*-z_{t}\big\rVert_{(\beta \mathcal{L}+\frac{\mathbf{1}\mathbf{1}^{\mathrm{T}}}{n})^{-1}}^2+\frac{1}{2}\big\lVert z^*-z_0 \big\rVert^2_{(\beta \mathcal{L}+\frac{\mathbf{1}\mathbf{1}^{\mathrm{T}}}{n})^{-1}}\\
	&+\sum_{k=0}^{t-1}\Big( \Big\langle x^*-x_{k+1},\beta\mathcal{L}(e_k-e_{k+1}) \rangle +\langle z_{k+1}-z^*,e_{k+1} \Big\rangle\Big)\\
	&-\frac{1}{2}\sum_{k=0}^{t-1}\big\lVert z_{k+1}-z_{k}\big\rVert^2_{(\beta \mathcal{L}+\frac{\mathbf{1}\mathbf{1}^{\mathrm{T}}}{n})^{-1}}.
	\end{split}
	\end{equation}
	Since $P-{L_f}\succ 0$ and $(\beta \mathcal{L}+\frac{\mathbf{1}\mathbf{1}^{\mathrm{T}}}{n})^{-1}\succ 0$, it holds
	\begin{equation*}
	\begin{split}
	&\frac{1}{2}\big\lVert x_{t}-x^* \big\rVert_{P}^2+\frac{1}{2}\big\lVert z^*-z_{t}\big\rVert_{(\beta \mathcal{L}+\frac{\mathbf{1}\mathbf{1}^{\mathrm{T}}}{n})^{-1}}^2 \\
	\leq&  \frac{1}{2}\big\lVert x_{0}-x^* \big\rVert_{P}^2+\frac{1}{2}\big\lVert z_0-z^* \big\rVert^2_{(\beta \mathcal{L}+\frac{\mathbf{1}\mathbf{1}^{\mathrm{T}}}{n})^{-1}}\\
	&
	+\sum_{k=1}^{t}\Big( \Big\langle x^*-x_{k},\beta \mathcal{L}\big(e_{k-1}-e_{k}\big) \Big\rangle +\Big\langle z_{k}-z^*,e_{k} \Big\rangle\Big) .
	\end{split}
	\end{equation*}
	By the monotonicity of $ E_k $ and the Cauchy-Schwarz inequality, we further have
	\begin{equation}\label{intermediate_bound_primal-dual}
	\begin{split}
	&\frac{1}{4}\min\big\{\underline{\lambda}(L_f),\frac{1}{\overline{\lambda}(\beta \mathcal{L}+\frac{\mathbf{1}\mathbf{1}^{\mathrm{T}}}{n})}\big\}\big(  \big\lVert x_{t}-x^* \big\rVert+\big\lVert z^*-z_{t}\big\rVert \big)^2 \\
	\leq& \frac{1}{2}\big\lVert x_{t}-x^* \big\rVert_{P}^2+\frac{1}{2}\big\lVert z^*-z_{t}\big\rVert_{(\beta \mathcal{L}+\frac{\mathbf{1}\mathbf{1}^{\mathrm{T}}}{n})^{-1}}^2 \\
	&
	+\sum_{k=1}^{t}\Big( \Big\langle x^*-x_{k},\beta \mathcal{L}\big(e_{k-1}-e_{k}\big) \Big\rangle +\Big\langle z_{k}-z^*,e_{k} \Big\rangle\Big) \\
	\leq&  \frac{1}{2}\big\lVert x_{0}-x^* \big\rVert_{P}^2+\frac{1}{2}\big\lVert z_0-z^* \big\rVert^2_{(\beta \mathcal{L}+\frac{\mathbf{1}\mathbf{1}^{\mathrm{T}}}{n})^{-1}}\\
	&
	+\sum_{k=1}^{t} \max\big\{2\beta\overline{\lambda}(\mathcal{L}),1\big\}\sqrt{n}E_{k-1}\big(\big\lVert x^*-x_{k} \big\rVert+\big\lVert z_{k}-z^*\big\rVert\big) .
	\end{split}
	\end{equation}
	Upon using Lemma 1 in \cite{schmidtconvergence}, we obtain
	\begin{equation*}
	\begin{split}
	&\big\lVert x_{t}-x^* \big\rVert+\big\lVert z^*-z_{t}\big\rVert\\
	\leq & A_t+ \Big(  
	2\frac{\big\lVert x_{0}-x^* \big\rVert_{P}^2+\big\lVert z_0-z^* \big\rVert^2_{(\beta \mathcal{L}+\frac{\mathbf{1}\mathbf{1}^{\mathrm{T}}}{n})^{-1}}}{b}+A_t^2\Big)^{1/2}
	\end{split}
	\end{equation*}
	where $b$ and $A_t$ are defined in Theorem \ref{Convergence_smooth}.
	By the monotonicity and positivity of $A_t$, the desired result follows.
\end{proof}

We are now in a position to present the proof for Theorem \ref{Convergence_smooth}.

\begin{proof}[Proof of Theorem \ref{Convergence_smooth}]
	Manipulating \eqref{sum_primal-dual} and using the similar procedure as in \eqref{intermediate_bound_primal-dual} allow us to get
	\begin{equation*}
	\begin{split}
	&\frac{ 1}{2}\sum_{k=0}^{t-1}\big(\big\lVert x_{k+1}-x_k \big\rVert^2_{P-{L_f}}+\big\lVert z_{k}-z_{k+1}\big\rVert^2_{(\beta \mathcal{L}+\frac{\mathbf{1}\mathbf{1}^{\mathrm{T}}}{n})^{-1}}\big)\\
	\leq&\frac{1}{2}\big\lVert x_{0}-x^* \big\rVert_{P}^2 +\frac{1}{2}\big\lVert z_0-z^* \big\rVert^2_{(\beta \mathcal{L}+\frac{\mathbf{1}\mathbf{1}^{\mathrm{T}}}{n})^{-1}}\\
	&+\big(\big\lVert x^*-x_{t} \big\rVert+\big\lVert z^*-z_{t}\big\rVert\big)\sum_{k=1}^{t} a\sqrt{n}E_{k-1} 
	\end{split}
	\end{equation*}
	where $a$ is defined in Theorem \ref{Convergence_smooth}.
	In light of Lemma \ref{primal-dual_bound}, we have that if $E_{k-1}$ is summable, then 
	\begin{equation*}
	\sum_{k=0}^{\infty}\big(\big\lVert x_{k+1}-x_k \big\rVert^2_{P-{L_f}}+\big\lVert z_{k}-z_{k+1}\big\rVert^2_{(\beta \mathcal{L}+\frac{\mathbf{1}\mathbf{1}^{\mathrm{T}}}{n})^{-1}}\big)<\infty.
	\end{equation*}
	Since $P-{L_f}\succ0,\big(\beta \mathcal{L}+\frac{\mathbf{1}\mathbf{1}^{\mathrm{T}}}{n}\big)^{-1}\succ0$, we further have
	\begin{equation*}
	\lim\limits_{k\rightarrow\infty}\big(x_{k+1},z_{k+1}\big)-\big(x_{k},z_{k}\big)=0.
	\end{equation*}
	Denote the limit point of $\big\{\big(x_k,z_{k}\big)\big\}_{k\geq 1}$ by $\big(x_{\infty},z_{\infty}\big)$. Note that $\lim\limits_{k\rightarrow\infty}E_k=0$ by assumptions. From 
	\begin{equation*}
	\beta \mathcal{L}\big(e_{k+1}+x_{k+1}\big)=z_{k+1}-z_k,
	\end{equation*}
	and 
	\begin{equation*}
	0 = \tilde{\triangledown} F(x_{k+1})+z_k -H{x}_k+\beta \mathcal{L}\big(e_k+x_k\big)+H x_{k+1}
	\end{equation*}
	where $\tilde{\triangledown} F(x_{k+1})$ is a subgradient of $F$ evaluated at $x_{k+1}$,
	we obtain $\mathcal{L}x_{\infty}=0$ and $\tilde{\triangledown} F(x_\infty)+z_\infty=0$, respectively. This implies that $\big(x_\infty,y_\infty\big)$ is a KKT point, where ${z_\infty}=\sqrt{\mathcal{L}}{y_\infty}$. Again, from \eqref{sum_primal-dual}, we have
	\begin{equation}\label{convergence}
	\begin{split}
	&\sum_{k=0}^{t-1}\Big(F(x_{k+1})-F(x^*)+\Big\langle y^*,\sqrt{\mathcal{L}}x_{k+1}\Big\rangle\Big)\\
	\leq & \frac{1}{2}\big\lVert x_{0}-x^* \big\rVert_{P}^2 
	+\frac{1}{2}\big\lVert z_0-z^* \big\rVert^2_{(\beta \mathcal{L}+\frac{\mathbf{1}\mathbf{1}^{\mathrm{T}}}{n})^{-1}} 
	\\
	&	+\frac{b}{2}A_t\Big(2A_t+  
	\sqrt{\frac{{2}}{b}}\big(\big\lVert x_{0}-x^* \big\rVert_{P}+\big\lVert z_0-z^* \big\rVert_{(\beta \mathcal{L}+\frac{\mathbf{1}\mathbf{1}^{\mathrm{T}}}{n})^{-1}}\big)\Big)\\
	\leq & \Big(\frac{\big\lVert x_{0}-x^*\big \rVert_{P} 
		+\big\lVert z_0-z^* \big\rVert_{(\beta \mathcal{L}+\frac{\mathbf{1}\mathbf{1}^{\mathrm{T}}}{n})^{-1}}}{\sqrt{2}}+\sqrt{b}A_t \Big)^2,
	\end{split}
	\end{equation}
	which in conjunction with
	\begin{equation*}
	\begin{split}
	&t \Big( F({ \hat{x}}_{t})-F(x^*) + \Big\langle y^*,\sqrt{\mathcal{L}}{ \hat{x}}_{t} \Big\rangle \Big)\\
	\leq &
	\sum_{k=0}^{t-1}\Big( F(x_{k+1})-F(x^*) + \Big\langle y^*,\sqrt{\mathcal{L}}x_{k+1} \Big\rangle \Big)
	\end{split}
	\end{equation*}
	gives
	\begin{equation*}
	\begin{split}
	&F({ \hat{x}}_{t})-F(x^*) + \Big\langle y^*,\sqrt{\mathcal{L}}{ \hat{x}}_{t} \Big\rangle \\
	\leq & \frac{1}{2t}\big({\big\lVert x_{0}-x^* \big\rVert_{P} 
		+\big\lVert z_0-z^* \big\rVert_{(\beta \mathcal{L}+\frac{\mathbf{1}\mathbf{1}^{\mathrm{T}}}{n})^{-1}}}+\sqrt{2b}A_t \big)^2 \\
	= &  \frac{1}{2t}\big({\big\lVert x_{0}-x^* \big\rVert_{P} 
		+\big\lVert \sqrt{\mathcal{L}}y^* \big\rVert_{(\beta \mathcal{L}+\frac{\mathbf{1}\mathbf{1}^{\mathrm{T}}}{n})^{-1}}}+\sqrt{2b}A_t \big)^2,
	\end{split}
	\end{equation*}
	where $z_0=0$ for initialization is used to get the last equality.
	{ 
	Finally, we consider
	\begin{equation}
	\begin{split} \label{primal_dual_error_2}
	&F({\hat{x}}_{t})-F(x^*)\leq F({\hat{x}}_{t})-F(x^*) + \rho\big\lVert \sqrt{\mathcal{L}}{\hat{x}}_{t} \big\rVert \\
	\leq & \sup_{\lVert y^*\lVert \leq \rho} \frac{\big({\big\lVert x_{0}-x^* \big\rVert_{P} 
			+\big\lVert y^* \big\rVert_{\mathcal{L}(\beta \mathcal{L}+\frac{\mathbf{1}\mathbf{1}^{\mathrm{T}}}{n})^{-1}}}+\sqrt{2b}A_t \big)^2}{2t}\\
	\leq &  \frac{\big({\big\lVert x_{0}-x^* \big\rVert_{P} 
			+\rho\big\lVert {\mathcal{L}(\beta \mathcal{L}+\frac{\mathbf{1}\mathbf{1}^{\mathrm{T}}}{n})^{-1}} \big\rVert}+\sqrt{2b}A_t \big)^2}{2t}.
	\end{split}
	\end{equation}
	By \eqref{primal_dual_error}, it holds that
	\begin{equation}\label{lower_bound}
	F({\hat{x}}_{t})-F(x^*) \geq -\big\lVert y^*\big\rVert\big\lVert\sqrt{\mathcal{L}}{ \hat{x}}_{t}\big\rVert.
	\end{equation}
	By combining \eqref{lower_bound} with \eqref{primal_dual_error_2}, one gets 
	\begin{equation*}
	\begin{split}
	&(\rho-\lVert {y^*}\rVert)\lVert\sqrt{\mathcal{L}}{ \hat{x}}_{t}\big\rVert\\
	\leq & \frac{\big({\big\lVert x_{0}-x^* \big\rVert_{P} 
			+\rho\big\lVert {\mathcal{L}(\beta \mathcal{L}+\frac{\mathbf{1}\mathbf{1}^{\mathrm{T}}}{n})^{-1}} \big\rVert}+\sqrt{2b}A_t \big)^2}{2t}.
	\end{split}
	\end{equation*}
Therefore the bound for $\big\lVert \sqrt{\mathcal{L}}{\hat{x}}_{t} \big\rVert$ in \eqref{error_rate} holds. Using \eqref{lower_bound} again allows us to obtain the lower bound for $F({\hat{x}}_t)-F(x^*)$ in \eqref{objective_rate}.} This completes the proof.
\end{proof}

\section{}
\begin{proof}[Proof of Theorem \ref{linear_convergence}]
	By setting $\tilde{\triangledown}g(x_{k+1})=0$ in \eqref{optimality_condition}
	and $z^*=\sqrt{\mathcal{L}}y^*$ in \eqref{KKT1},
	we have
	\begin{equation}\label{optimality_condition_strong_convexity}
	\begin{split}
	0 =& \triangledown f({x}_k)-\triangledown f({x}^*)+z_{k+1}-z^*  + P\big( x_{k+1}-x_k \big)\\
	&+\beta \mathcal{L}\big( e_k-e_{k+1}\big).
	\end{split}
	\end{equation}
	As in the proof of Lemma \ref{one_iteration}, we consider the inner products of $x_{k+1}-x^*$ with both sides of the above equality
	\begin{equation}\label{optimality_condition_inner_product_strongly_convexity}
	\begin{split}
	& \underbrace{\Big\langle x_{k+1} -x^*,\triangledown f({x}_k)-\triangledown f({x}^*)\Big\rangle}_{\romannumeral1} + \underbrace{\Big\langle x_{k+1}-x^* , z_{k+1}-z^*\Big\rangle }_{\romannumeral2}\\
	+&\underbrace{\Big\langle x_{k+1} -x^*,P\big(x_{k+1}-x_k\big)\Big\rangle}_{\romannumeral3}\\
	+&\Big\langle x_{k+1} -x^*, \beta\mathcal{L}\big(e_k-e_{k+1}\big)\Big\rangle= 0.
	\end{split}
	\end{equation}
	For ``${\romannumeral1}$", we consider $\triangledown f({x}_k)-\triangledown f({x}^*)= \triangledown f({x}_k)-\triangledown f({x}_{k+1})+\triangledown f({x}_{k+1})-\triangledown f({x}^*)$ and get from the strong convexity and smoothness of $f$ that
	\begin{equation}\label{k1}
	\begin{split}
	{\romannumeral1}
	\geq& \big\lVert x_{k+1}-x^* \big\rVert_M^2\\
	&-\frac{1}{2k_1}\big\lVert \triangledown f({x}_k)-\triangledown f({x}_{k+1}) \big\rVert^2 - \frac{k_1}{2}\big\lVert x_{k+1}-x^* \big\rVert^2 \\
	\geq& \frac{1}{2}\big\lVert x_{k+1}-x^* \big\rVert_{Q}^2 -\frac{1}{2k_1}\big\lVert {x}_k-{x}_{k+1} \big\rVert_{{L_f^2}}^2.
	\end{split}
	\end{equation}
	With the same reasoning as in \eqref{further_transformation}, we have
	\begin{equation}\label{dual_result_strong_convexity}
	\begin{split}
	{\romannumeral2}
	=  &\frac{1}{2}  \big\lVert z^*-z_{k+1}\big\rVert_{\big(\beta\mathcal{L}+\frac{\mathbf{1}\mathbf{1}^{\mathrm{T}}}{n}\big)^{-1}}^2+\frac{1}{2}\big\lVert z_{k}-z_{k+1}\big\rVert^2_{\big(\beta\mathcal{L}+\frac{\mathbf{1}\mathbf{1}^{\mathrm{T}}}{n}\big)^{-1}} \\
	& -\frac{1}{2}\big\lVert z_{k}-z^*\big\rVert_{\big(\beta\mathcal{L}+\frac{\mathbf{1}\mathbf{1}^{\mathrm{T}}}{n}\big)^{-1}}^2-\Big\langle e_{k+1} , z_{k+1}-z^*\Big\rangle .
	\end{split}
	\end{equation}
	Using Lemma \ref{symmetric} allows us to obtain
	\begin{equation}\label{c}
	\begin{split}
	{\romannumeral3} 
	=\frac{1}{2}\big( \big\lVert x_{k+1}-x^* \big\rVert_{P}^2 + \big\lVert x_{k+1}-x_{k} \big\rVert_{P}^2 - \big\lVert x_{k}-x^* \big\rVert_{P}^2\big).
	\end{split}
	\end{equation}
	Combing Eqs. \eqref{optimality_condition_inner_product_strongly_convexity}-\eqref{c} yields
	\begin{equation}\label{rough_decreasing}
	\begin{split}
	&\frac{1}{2}\big\lVert x_{k}-x^* \big\rVert_{P}^2+\frac{1}{2}\big\lVert z_{k}-z^*\big\rVert_{\big(\beta\mathcal{L}+\frac{\mathbf{1}\mathbf{1}^{\mathrm{T}}}{n}\big)^{-1}}^2\\
	&	+\Big\langle e_{k+1} , z_{k+1}-z^*\Big\rangle-\Big\langle x_{k+1} -x^*, \beta\mathcal{L}\big(e_k-e_{k+1}\big)\Big\rangle\\\geq& 
	\frac{1}{2} \big\lVert x_{k+1}-x^* \big\rVert_{P+Q}^2 + \frac{1}{2}\big\lVert x_{k+1}-x_{k} \big\rVert_{P-\frac{{L_f^2}}{k_1}}^2 \\
	& + \frac{1}{2}  \big\lVert z^*-z_{k+1}\big\rVert_{\big(\beta\mathcal{L}+\frac{\mathbf{1}\mathbf{1}^{\mathrm{T}}}{n}\big)^{-1}}^2+\frac{1}{2}\big\lVert z_{k}-z_{k+1}\big\rVert^2_{\big(\beta\mathcal{L}+\frac{\mathbf{1}\mathbf{1}^{\mathrm{T}}}{n}\big)^{-1}} .
	\end{split}
	\end{equation}
	
{	 In order to obtain linear convergence from \eqref{rough_decreasing}, we establish a relation between $\big\lVert  x_{k+1}-x_{k}  \big\rVert^2$ and the primal-dual residual in the following.}
	By \eqref{optimality_condition_strong_convexity} and the inequality
	\begin{equation*}
	2\Big \langle u,v \Big\rangle \geq -w \big\lVert u \big\rVert^2-\frac{1}{w} \big\lVert v\big\rVert^2, \forall u,v\in\mathbb{R}^n,w>0,
	\end{equation*} 
	it holds
	\begin{equation}\label{immediate_result}
	\begin{split}
	&\big\lVert P\big( x_{k+1}-x_{k} \big) \big\rVert^2 \\
	=& \big\lVert \triangledown f({x}_k)-\triangledown f({x}^*)+z_{k+1}-z^*  +\beta\mathcal{L}\big( e_k-e_{k+1}\big) \big\rVert^2 \\
	\geq & \big(1-k_2-k_3\big) \big\lVert {x}_k-{x}^*\big\rVert_{ L_f^2}^2 
	+\big(1-\frac{2}{k_3} \big)\big\lVert z_{k+1}-z^* \big\rVert^2  \\
	&+  \big(1-\frac{1}{k_2} -{k_3} \big)\big \lVert \beta\mathcal{L}\big( e_k-e_{k+1}\big)  \big\rVert^2
	\end{split}
	\end{equation}
	for any $k_2>0$ and $k_3>2$.
	If $\sigma+k_5$ is sufficiently small such that
	\begin{subequations}\label{key}
		\begin{align}
		\frac{(k_2+k_3-1)(\sigma+k_5) L_f^2}{(1-\frac{2}{k_3})\underline{\lambda}(\beta\mathcal{L}+\frac{\mathbf{1}\mathbf{1}^{\mathrm{T}}}{n})}&\preceq k_4Q	\label{key1} \\
		\frac{(\sigma+k_5)P^2}{(1-\frac{2}{k_3})\underline{\lambda}(\beta\mathcal{L}+\frac{\mathbf{1}\mathbf{1}^{\mathrm{T}}}{n})} &\preceq  P-\frac{ L_f^2}{k_1} \label{key2} \\
		(\sigma+k_5) (P+k_4Q) &\preceq (1-k_4)Q\label{key3}
		\end{align}
	\end{subequations}
	for some $0< k_4< 1$,
	then we can get from \eqref{immediate_result} that
	\begin{equation}
	\begin{split} \label{key_inequality}
	& \frac{1}{2} \big\lVert x_{k+1}-x^* \big\rVert_{(1-k_4)Q}^2 + \frac{1}{2}\big\lVert x_{k+1}-x_{k} \big\rVert_{P-\frac{ L_f^2}{k_1}}^2 \\
	& + \frac{1}{2} \big\lVert x_{k}-x^* \big\rVert_{k_4Q}^2+\frac{1}{2}\big\lVert z_{k}-z_{k+1}\big\rVert^2_{\big(\beta\mathcal{L}+\frac{\mathbf{1}\mathbf{1}^{\mathrm{T}}}{n}\big)^{-1}} \\
	& + \frac{\big({k_3}+\frac{1}{k_2} -1 \big)(\sigma+k_5)}{2(	1-\frac{2}{k_3})\underline{\lambda}(\beta\mathcal{L}+\frac{\mathbf{1}\mathbf{1}^{\mathrm{T}}}{n}) }\big \lVert \beta\mathcal{L}\big( e_k-e_{k+1}\big)  \big\rVert^2\\
	\geq&  \frac{\sigma+k_5}{2} \Big(\big\lVert x_{k+1}-x^* \big\rVert_{P+k_4Q }^2+\big\lVert z_{k+1}-z^*\big\rVert_{\big(\beta\mathcal{L}+\frac{\mathbf{1}\mathbf{1}^{\mathrm{T}}}{n}\big)^{-1}}^2\Big).
	\end{split}
	\end{equation}
	Combining \eqref{key_inequality} and \eqref{rough_decreasing} leads to	
	%
	\begin{equation*}
	\begin{split}
	&\frac{1}{2}\big\lVert x_{k}-x^* \big\rVert_{P+k_4Q}^2+\frac{1}{2}\big\lVert z_{k}-z^*\big\rVert_{\big(\beta\mathcal{L}+\frac{\mathbf{1}\mathbf{1}^{\mathrm{T}}}{n}\big)^{-1}}^2\\
	& +\frac{\big({k_3}+\frac{1}{k_2} -1 \big)(\sigma+k_5)}{2(	1-\frac{2}{k_3})\underline{\lambda}(\beta\mathcal{L}+\frac{\mathbf{1}\mathbf{1}^{\mathrm{T}}}{n}) }\big \lVert \beta\mathcal{L}\big( e_k-e_{k+1}\big)  \big\rVert^2\\	
	&	+\Big\langle e_{k+1} , z_{k+1}-z^*\Big\rangle-\Big\langle x_{k+1} -x^*, \beta\mathcal{L}\big(e_k-e_{k+1}\big)\Big\rangle\\
	\geq &  \frac{\sigma+k_5+1}{2} \Big(\big\lVert x_{k+1}-x^* \big\rVert_{P+k_4Q}^2  \\
	& +  \big\lVert z^*-z_{k+1}\big\rVert_{\big(\beta\mathcal{L}+\frac{\mathbf{1}\mathbf{1}^{\mathrm{T}}}{n}\big)^{-1}}^2 \Big).
	\end{split}
	\end{equation*}
	By monotonicity of $ E_k $ and the inequality 
	\begin{equation*}
	\Big \langle u,v \Big\rangle \leq \frac{k_5}{2} \big\lVert u \big\rVert_O^2+\frac{1}{2k_5} \big\lVert v\big\rVert_{O^{-1}}^2, \forall u,v\in\mathbb{R}^n,O\succ 0,
	\end{equation*}
	we are able to separate triggering errors from the primal-dual residual and arrive at \eqref{linear convergence}. This completes the proof.
\end{proof}





%

\section*{Acknowledgment}
	The authors would like to thank the Associate Editor and the anonymous reviewers for their constructive suggestions that have helped improve the paper.

\end{document}